\newtheorem{thm}{Theorem}[section]
\newtheorem{theorem}[thm]{Theorem}
\newtheorem{corollary}[thm]{Corollary}
\newtheorem{lemma}[thm]{Lemma}
\newtheorem{proposition}[thm]{Proposition}
\theoremstyle{definition}
\newtheorem{definition}[thm]{Definition}
\newtheorem{example}[thm]{Example}
\theoremstyle{remark}
\newtheorem{remark}[thm]{Remark}
\DeclareMathOperator{\m}{\frak{m}}
\DeclareMathOperator{\p}{\frak{p}}
\DeclareMathOperator{\PP}{\mathcal{P}}
\DeclareMathOperator{\Hom}{Hom}
\DeclareMathOperator{\Ext}{Ext}
\DeclareMathOperator{\Tor}{Tor}
\def\gldim{{\rm gl.dim}}
\def\w-wgldim{{\rm w-w.gl.dim}}
\def\wgldim{{\rm w.gl.dim}}
\def\s-wgldim{u$-$ S$-w.gl.dim$}
\begin{document}
	
	\title[ON $w$-COPURE PROJECTIVE MODULES]{ON $w$-COPURE PROJECTIVE MODULES}
\author[R.A.K. Assaad]{Refat Abdelmawla Khaled Assaad}
\address{Department of Mathematics, Faculty of Science,  Moulay Ismail University - Meknes, Box 11201, Zitoune, Morocco}	
\email{refat90@hotmail.com}
	
\author[M. Tamekkante]{Mohammed Tamekkante}
\address{Department of Mathematics, Faculty of Science,  Moulay Ismail University - Meknes, Box 11201, Zitoune, Morocco}
\email{tamekkante@yahoo.fr}

\author[L. Mao]{Lixin Mao}
\address{School of Mathematics and Physics, Nanjing Institute of Technology, Nanjing, China}
\email{maolx2@hotmail.com}

\subjclass[2010]{13D05, 13D07, 13H05}
\keywords{copure projective, $w$-split, $w$-projective, $DW$ rings and domains, Krull domains}	

	\begin{abstract}
Let $R$ be a commutative ring. An $R$-module $M$ is said to be
$w$-split if $\Ext_{R}^1(M,N)$ is a GV-torsion $R$-module for all $R$-modules $N$. It is known that every projective module is $w$-split, but the converse is not true in general. In this paper, we study the w-split dimension of a flat module. To do so, we introduce and study the so-called $w$-copure (resp., strongly $w$-copure) projective modules which is in some way a generalization of the notion of copure (resp., strongly copure) projective modules. An $R$-module $M$ is said to be $w$-copure projective (resp., strongly $w$-copure projective) if $\Ext_{R}^1(M,N)$ (resp., $\Ext_{R}^n(M,N)$) is a GV-torsion $R$-module for all flat $R$-modules $N$ and any $n\geq1$.
\end{abstract}
	\maketitle

	\section{\bf Introduction}
Throughout, all rings considered are commutative with unity and all modules are unital. Let $R$ be a ring and $M$ be an $R$-module. As usual, we use ${\rm pd}_R(M)$, ${\rm id}_R(M)$, and ${\rm fd}_R(M)$ to denote, respectively, the classical projective dimension, injective dimension, and flat dimension of $M$, and ${\rm wdim}(R)$ and ${\rm gldim}(R)$ to denote, respectively, the weak and global homological dimensions of $R$.

In \cite{fu zh}, Fu, Zhu and Ding introduced the concepts of copure projective modules, $n$-copure projective modules, strongly copure projective modules. An $R$-module $M$ is called $n$-copure projective if $\Ext^1_R(M,N)=0$ for any $R$-module $N$ with ${\rm fd}_R(N)\leqslant n$. $M$ is said to be strongly copure projective if $\Ext^i_R(M,N)=0$ for any flat $R$-module $N$, and all $i\geq 1$. The $0$-copure projective modules are called copure projective. Also, the authors in this paper proved that $R$ is a QF ring if and only if every $R$-module is copure projective. For more details about copure projective (strongly copure projective) modules, see  \cite{fu zh, Gao, TAO}.

The motivation of this paper is to generalize the concepts of copure projective in the module case and the ring case using a $w$-operation.

Now, we review some definitions and notation. Let $J$ be an ideal of $R$. Following \cite{HFX}, $J$ is called a \emph{Glaz-Vasconcelos ideal} (a $GV$-ideal for short) if $J$ is finitely generated and the natural homomorphism $\varphi : R \rightarrow J^{\ast} = \Hom_R(J,R)$ is
an isomorphism.  Let $M$ be an $R$-module and define
$${\rm tor}_{GV}(M) = \{x\in M \mid Jx = 0\text{  for some } J\in  GV(R)\},$$
where  $GV(R)$ is the set of $GV$-ideals of $R$. It is clear that ${\rm tor}_{GV}(M)$ is a submodule of $M$. Now $M$ is said to be $GV$-\emph{torsion} (resp., $GV$-\emph{torsion-free}) if ${\rm tor}_{GV}(M) =M$ (resp., ${\rm tor}_{GV}(M) =0$).   A $GV$-torsion-free module $M$ is called a $w$-\emph{module} if ${\rm Ext}^1_R(R/J, M) =0$ for any $J\in GV(R)$. Projective modules and reflexive modules are $w$-modules. In the recent paper \cite{WANG FLAT}, it was shown that flat modules are $w$-modules. The notion of $w$-modules was introduced firstly over a domain \cite{MCCASLAND} in the study of Strong Mori
domains and was extended to commutative rings with zero divisors in \cite{HFX}. Let $w$-${\rm Max}(R)$ denote the set of maximal $w$-ideals of $R$, i.e., $w$-ideals of $R$ maximal among proper integral $w$-ideals of $R$. Following  \cite[Proposition 3.8]{HFX}, every maximal $w$-ideal is prime. For any $GV$-torsion-free module $M$,
$$M_{w}:=\{x\in E(M)\mid Jx\subseteq M \text{  for some } J\in  GV(R)\}$$
is a $w$-submodule of $E(M)$ containing $M$ and is called the $w$-\emph{envelope} of $M$,
where $E(M)$ denotes the injective hull  of $M$. It is clear that a $GV$-torsion-free module $M$ is a $w$-module if and only if $M_{w}=M$.
Let $M$ and $N$ be $R$-modules and let $f : M \rightarrow N$ be a homomorphism. Following \cite{FGTYPE},
$f$ is called a $w$-\emph{monomorphism} (resp., $w$-\emph{epimorphism}, $w$-\emph{isomorphism}) if $f_{\mathfrak{p}} :
M_{\mathfrak{}}\rightarrow N_{\mathfrak{p}}$ is a monomorphism (resp., an epimorphism, an isomorphism) for all
$\mathfrak{p}\in w\text{-}{\rm Max}(R)$.  A sequence $0\rightarrow A \rightarrow B \rightarrow C\rightarrow 0$ of $R$-modules is said to be $w$-exact if $0\rightarrow A_{\p} \rightarrow B_{\p} \rightarrow C_{\p}\rightarrow 0$ is an exact for any $\p\in$ $w$-${\rm Max}(R)$. An $R$-module $M$ is called a $w$-\emph{flat module} if the induced map $1 \otimes f : M\otimes A \rightarrow M\otimes B$ is a $w$-monomorphism for any $w$-monomorphism $f : A\rightarrow B$. Certainly flat modules are $w$-flat, but the converse implication is not true in general.

In Section $2$, the definition and some general results are given. An $R$-module $M$ is said to be $w$-copure projective (resp., strongly $w$-copure projective) if $\Ext_{R}^1(M,N)$ (resp., $\Ext_{R}^n(M,N)$) is a GV-torsion $R$-module for all flat $R$-modules $N$ and any $n\geq1$. In this Section we will prove that if $R$ is a ring with finite weak global dimension ($\wgldim(R)<\infty$), then $R$ is a $DW$-ring if and only if every strongly $w$-copure projective $R$-module is strongly copure projective and we will prove that if $R$ is a Krull domain, then $R$ is QF if and only if every $R$-module is $w$-copure projective (resp., strongly $w$-copure projective) and $R$ is a $DW$-ring if and only if every $w$-copure projective (resp., strongly $w$-copure projective) $R$-module is a strong $w$-module (and so, a $w$-module).

In Section $3$,  we study the $w$-copure projective (resp., strongly $w$-copure projective) modules over coherent rings and we will prove that if $R$ is a domain, then $R$ is a $w$-IF-ring( a ring with every injective $w$-module is $w$-flat) if and only if every finitely presented module is $w$-copure projective (resp., strongly $w$-copure projective). Also, we prove that a ring $R$ is semihereditary if and only if every $w$-copure projective $R$-module is flat if and only if every finitely presented $w$-copure projective $R$-module is projective.

In the last section, we study change of rings theorems for the $w$-copure projective in various contexts.

\section{\bf $w$-copure projective modules}
We start this section with the following definition.
\begin{definition}\label{w-copu proj}
	An $R$-module $M$ is said to be $w$-copure projective if $\Ext_R^1(M,N)$ is  GV-torsion for any flat $R$-module $N$, and $M$ is said to be strongly $w$-copure projective if $\Ext_R^i(M,N)$ is GV-torsion for any flat $R$-module $N$ and any $i\geq 1$.
\end{definition}
Obviously, every copure projective (resp., strongly copure projective) module is $w$-copure projective (resp., strongly $w$-copure projective). Recall that a ring $R$ is called a $DW$-ring if every ideal of $R$ is a $w$-ideal, or equivalently $GV(R)={R}$ \cite[Proposition 2.2]{A Mimo}. In this case, the only GV-torsion module is $(0)$, and so over such a ring, copure projective (resp., strongly copure projective) $R$-module and $w$-copure projective (resp., strongly $w$-copure projective) $R$-module coincide. Examples of $DW$-rings are Pr\"{u}fer domains, domains with Krull dimension one, and rings with Krull dimension zero.
\begin{proposition}\label{w-coup}
	Let $R$ be a ring and $M$ be an $R$-module. If $M$ is strongly $w$-copure projective, then $\Ext_R^1(M,N)$ is GV-torsion for any $R$-module $N$ with ${\rm fd}_R N<\infty$.
\end{proposition}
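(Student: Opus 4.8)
The plan is to argue by induction on $n=\fd_R(N)$, proving the stronger assertion that $\Ext_R^i(M,N)$ is $GV$-torsion for \emph{all} $i\geq 1$ (the proposition being the case $i=1$). When $n=0$ the module $N$ is flat, so $\Ext_R^i(M,N)$ is $GV$-torsion for every $i\geq 1$ directly from the definition of a strongly $w$-copure projective module. This settles the base case.

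For the inductive step, suppose $\fd_R(N)=n\geq 1$ and that the claim holds for all modules of flat dimension $<n$. First I would choose a short exact sequence
$$0\rightarrow K\rightarrow F\rightarrow N\rightarrow 0$$
with $F$ flat; by the standard dimension-shifting property of flat dimension one has $\fd_R(K)=n-1$. Applying $\Hom_R(M,-)$ yields, for each $i\geq 1$, the exact sequence
$$\Ext_R^i(M,F)\longrightarrow \Ext_R^i(M,N)\longrightarrow \Ext_R^{i+1}(M,K).$$
Here the left-hand term is $GV$-torsion because $F$ is flat and $M$ is strongly $w$-copure projective, while the right-hand term is $GV$-torsion by the induction hypothesis applied to $K$ (note $\fd_R(K)=n-1<n$ and $i+1\geq 2\geq 1$).

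It then remains to deduce that the middle term $\Ext_R^i(M,N)$ is $GV$-torsion. For this I would invoke the fact that the class of $GV$-torsion modules is a hereditary torsion class, i.e. it is closed under submodules, quotient modules, and extensions (a standard fact in $w$-operation theory; see \cite{HFX}). Concretely, the image of $\Ext_R^i(M,F)\to\Ext_R^i(M,N)$ is a quotient of a $GV$-torsion module, hence $GV$-torsion, and the image of $\Ext_R^i(M,N)\to\Ext_R^{i+1}(M,K)$ is a submodule of a $GV$-torsion module, hence $GV$-torsion; since $\Ext_R^i(M,N)$ is an extension of the latter by the former, it is $GV$-torsion as well. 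This completes the induction and, taking $i=1$, the proposition.

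The routine parts are the long exact sequence and the dimension shift; the point that genuinely needs the structure of the $w$-operation is the closure of $GV$-torsion modules under extensions, which ultimately rests on the facts that $GV$-ideals are finitely generated and closed under products. This is the step I would present with the most care.
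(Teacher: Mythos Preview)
Your argument is correct. The paper takes a more compressed route: given a flat resolution $0\to F_m\to\cdots\to F_0\to N\to 0$ with $m=\fd_R(N)$, it asserts $\Ext_R^1(M,N)\cong\Ext_R^{m+1}(M,F_m)$ by dimension shifting and concludes because the right-hand side is $GV$-torsion. You instead unwind this shift one step at a time by induction, sandwiching $\Ext_R^i(M,N)$ between two $GV$-torsion modules in the long exact sequence and then invoking the closure of the $GV$-torsion class under extensions. Your version is actually the more careful of the two: the isomorphism the paper writes does not literally hold, since the intermediate groups $\Ext_R^j(M,F_i)$ are only $GV$-torsion rather than zero; it does become an isomorphism after localizing at any maximal $w$-ideal, which is all that is needed, but that step is left implicit in the paper. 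As a small bonus, your induction delivers the stronger conclusion that $\Ext_R^i(M,N)$ is $GV$-torsion for every $i\geq 1$, not just $i=1$.
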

\begin{proof}
	Let $N$ be an $R$-module with ${\rm fd}_R N=m<\infty$.
	 Then, there is an exact sequence $0\rightarrow F_m\rightarrow F_{m-1}\rightarrow \dots F_0\rightarrow N\rightarrow 0$ with each $F_i$ flat, and so
	$\Ext_R^1(M,N)\cong \Ext_R^{m+1}(M,F_m)$. Since $M$ is strongly $w$-copure projective, $\Ext_R^{m+1}(M,F_m)$ is GV-torsion which implies that $\Ext_R^1(M,N)$ is  GV-torsion.
\end{proof}
	\begin{proposition}\label{loca}
	Let $M$ be a strongly $w$-copure projective module and $\m$ be a maximal $w$-ideal of $R$. If $N$ is an $R_{\m}$-module which is flat as an $R$-module, then $\Ext_R^n(M,N)=0$ for all $n\geq 1$.
\end{proposition}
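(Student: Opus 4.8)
The plan is to combine two observations: that the Ext groups in question carry an $R_{\m}$-module structure, and that over this localization the GV-torsion guaranteed by Definition \ref{w-copu proj} must actually vanish. First I would note that since $N$ is an $R_{\m}$-module, each group $\Ext_R^n(M,N)$ inherits an $R_{\m}$-module structure: for $s\in R_{\m}$ the multiplication map $N\to N$ by $s$ induces, by functoriality of $\Ext_R^n(M,-)$, an endomorphism of $\Ext_R^n(M,N)$, and these make $\Ext_R^n(M,N)$ into an $R_{\m}$-module on which the action of an element of $R$ agrees with the action of its image in $R_{\m}$. At the same time, $N$ is flat as an $R$-module, so strong $w$-copure projectivity of $M$ gives that $\Ext_R^n(M,N)$ is GV-torsion for every $n\geq 1$.

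The key step is the elementary fact that no $GV$-ideal is contained in a maximal $w$-ideal. Indeed, if $J\in GV(R)$ then $J_w=R$; were $J\subseteq\m$, then applying the $w$-operation and using that $\m$ is a $w$-ideal would give $R=J_w\subseteq\m_w=\m$, a contradiction. Hence every $J\in GV(R)$ contains an element $s\notin\m$, that is, an element whose image in $R_{\m}$ is a unit.

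Putting these together, take any $x\in\Ext_R^n(M,N)$ with $n\geq 1$. Since $\Ext_R^n(M,N)$ is GV-torsion, there is some $J\in GV(R)$ with $Jx=0$, and by the previous paragraph we may choose $s\in J$ with $s\notin\m$. Then $sx=0$, but $s$ becomes a unit in $R_{\m}$, so multiplication by $s$ is an automorphism of the $R_{\m}$-module $\Ext_R^n(M,N)$; applying its inverse to $sx=0$ yields $x=0$. As $x$ was arbitrary, $\Ext_R^n(M,N)=0$ for all $n\geq 1$, as required.

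The substantive point, and the only place requiring genuine care, is the claim that $GV$-ideals avoid maximal $w$-ideals; this is what converts the purely ``torsion'' conclusion into an honest vanishing by producing a scalar $s$ that is simultaneously an annihilator of $x$ (coming from the $w$-theory) and a unit in the localization (coming from $s\notin\m$). The remaining steps — the $R_{\m}$-module structure on Ext and the invertibility of $s$ — are routine functoriality and localization arguments.
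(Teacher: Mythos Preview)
Your proof is correct and follows essentially the same approach as the paper's. The paper's argument is terser: it observes that $\Ext_R^n(M,N)$ is GV-torsion and, being an $R_{\m}$-module, coincides with its own localization $(\Ext_R^n(M,N))_{\m}$, which vanishes by the standard fact that GV-torsion modules localize to zero at maximal $w$-ideals. You unpack exactly this last fact by showing directly that every $J\in GV(R)$ meets $R\setminus\m$, which is precisely why $(\text{GV-torsion})_{\m}=0$; the two arguments are the same in substance.
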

\begin{proof} Since $N$ as an $R$-module is flat, $\Ext_R^n(M,N)$ is  $GV$-torsion. So $\Ext_R^n(M,N)=\left(\Ext_R^n(M,N)\right)_{\m}=0$.
\end{proof}
Recall from \cite{Wang and Lie}, that an $R$-module $M$ is called $w$-split if $\Ext_{R}^1(M,N)$ is a GV-torsion $R$-module for all $R$-modules $N$. From \cite{REF}, the $w$-split dimension of an $R$-module $M$, $w$-$\rm sd(M)$,  is defined by declaring that $w$-$\rm sd$$(M)\leq n$  ($n\in \mathbb{N}$) if $M$ has a $w$-split  resolution of length $n$. Otherwise, we set $w$-$\rm sd$$(M)=\infty$.

By \cite[Proposition 2.4 and Proposition 2.7]{Wang and Lie}, we have the following lemma.
\begin{lemma}\label{w-split}
	Let $R$ be a ring. The following statements hold:
	\begin{enumerate}
		\item  Every $w$-split $R$-module is strongly $w$-copure
		projective.
		\item  Every $w$-projective $w$-module is strongly $w$-copure	projective.
	\end{enumerate}
\end{lemma}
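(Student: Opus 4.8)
The plan is to derive both statements from the two cited results of Wang and Lie, the crux being to promote the defining $\Ext^1$-condition of a $w$-split module to all higher $\Ext^i$. For (1), recall that $M$ being $w$-split only asserts that $\Ext_R^1(M,N)$ is GV-torsion for every $R$-module $N$, whereas strong $w$-copure projectivity demands that $\Ext_R^i(M,N)$ be GV-torsion for all $i\geq 1$ when $N$ is flat. I would invoke \cite[Proposition 2.4]{Wang and Lie} to upgrade the former to the statement that $\Ext_R^i(M,N)$ is GV-torsion for every $R$-module $N$ and every $i\geq 1$. Since flat modules form a subclass of all $R$-modules, specializing $N$ to range over flat modules then yields exactly the strongly $w$-copure projective condition, so (1) follows at once.

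If one wished to prove this upgrade by hand, the natural route is dimension shifting: choose a short exact sequence $0\to K\to P\to M\to 0$ with $P$ projective, so that $\Ext_R^{i+1}(M,N)\cong\Ext_R^{i}(K,N)$ for all $i\geq 1$, and induct on $i$, the base case $i=1$ being the definition of $w$-split. The induction step requires the first syzygy $K$ to again be $w$-split --- equivalently, that the chosen sequence $w$-splits, exhibiting $K$ as a $w$-direct summand of the projective module $P$. I expect this closure of the class of $w$-split modules under syzygies to be the main obstacle, precisely because $\Ext$ does not commute with localization at maximal $w$-ideals in the absence of finiteness, so a naive stalk-by-stalk localization argument is unavailable. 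This is the technical content I am borrowing from Proposition 2.4.

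For (2), the point is that a $w$-projective module a priori controls $\Ext_R^1(M,N)$ only for $N$ ranging over $w$-modules, a strictly smaller test class than all $R$-modules, so $w$-projectivity is in general weaker than $w$-splitness. Here I would apply \cite[Proposition 2.7]{Wang and Lie}, whose role is to show that the additional hypothesis that $M$ is itself a $w$-module forces a $w$-projective $M$ to be $w$-split. Once $M$ is known to be $w$-split, part (1) applies verbatim and delivers that $M$ is strongly $w$-copure projective, which completes the argument.
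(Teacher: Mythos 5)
Your proposal follows essentially the same route as the paper, which disposes of the lemma in one line by citing \cite[Propositions 2.4 and 2.7]{Wang and Lie}: Proposition 2.4 upgrades the defining $\Ext^1$-condition of a $w$-split module to the GV-torsionness of $\Ext^i_R(M,N)$ for all $i\geq 1$ and all $N$, so restricting the test class to flat modules gives (1) exactly as you say, and your dimension-shifting sketch correctly locates where the real work (closure of $w$-split modules under syzygies) lives. The one point to flag is your reading of Proposition 2.7 in part (2): you reduce (2) to (1) by asserting that a $w$-projective $w$-module is $w$-split. That is stronger than what is needed and is doubtful in this generality --- elsewhere the paper only passes from $w$-projective to $w$-split under a finiteness hypothesis (see the proof of Proposition \ref{w-split2}, $(4)\Rightarrow(1)$, which invokes \cite[Proposition 2.5]{REF} for a finitely presented module), and Proposition \ref{w-split2} shows that $w$-split is strictly stronger than strongly $w$-copure projective (it additionally forces finite $w$-split dimension), so if every $w$-projective $w$-module were $w$-split, item (2) would simply be a special case of item (1) and the separate citation of Proposition 2.7 would be pointless. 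The safer (and intended) use of Proposition 2.7 is that it gives GV-torsionness of $\Ext^k_R(M,N)$ for $w$-modules $N$ when $M$ is a $w$-projective $w$-module; since flat modules are $w$-modules \cite{WANG FLAT}, this yields (2) directly without passing through $w$-splitness. With that adjustment your argument coincides with the paper's.
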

In \cite{FHT}, the authors introduced the $w$-Nagata ring, $R\{X\}$, of $R$. It is a localization of $R[X]$ at the multiplicative closed set  $S_w:=\{f\in R[X]\;|\;c(f)\in$
$GV(R)\}$, where $c(f)$ denotes the content of $f$. Similarly, the $w$-Nagata module $M\{X\}$ of an $R$-module $M$ is defined as $M\{X\}:= M[X]_{S_w}=M\otimes_R R\{X\}$.

In general, (strongly) $w$-copure projective modules need not be $w$-split as shown by the next proposition, which is a generalization of [\cite{fu zh}, Proposition 3.4].
\begin{proposition}\label{w-split2}
	Let $R$ be a ring and $M$ be an $R$-module. Then the following conditions are equivalent:
	\begin{enumerate}
		\item  $M$ is a $w$-split $R$-module;
		\item  $M$ is a strongly $w$-copure projective $R$-module and $w$-$\rm sd$$(M)< \infty$;
		\item  $M$ is a  $w$-copure projective $R$-module and $w$-$\rm sd$$(M)\leq 1$;
		
		If $M$ is a finitely presented $R$-module, then the above conditions are also equivalent to
		\item  $M\{X\}$ is a projective $R\{X\}$-module.
	\end{enumerate}
\end{proposition}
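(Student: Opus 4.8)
The plan is to establish the cycle $(1)\Rightarrow(2)\Rightarrow(3)\Rightarrow(1)$ for an arbitrary module $M$, and then to prove $(1)\Leftrightarrow(4)$ under the finitely presented hypothesis by descending to the $w$-Nagata ring $R\{X\}$. Throughout I will use the Ext-characterisation of the $w$-split dimension coming from \cite{REF}, namely that $w$-$\mathrm{sd}(M)\le n$ if and only if $\Ext^{n+1}_R(M,L)$ is GV-torsion for every $R$-module $L$, together with the fact that the class of GV-torsion modules is closed under submodules, quotients and extensions. The implication $(1)\Rightarrow(2)$ is immediate: a $w$-split module is strongly $w$-copure projective by Lemma \ref{w-split}(1), and $M$ is its own $w$-split resolution of length $0$, so $w$-$\mathrm{sd}(M)=0<\infty$. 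Likewise $(2)\Rightarrow(3)$ contains the trivial observation that a strongly $w$-copure projective module is $w$-copure projective, so the only content there is the dimension bound $w$-$\mathrm{sd}(M)\le 1$.

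For the bound in $(2)\Rightarrow(3)$, write $n:=w$-$\mathrm{sd}(M)<\infty$ and fix an arbitrary module $L$; by the Ext-characterisation it suffices to show $\Ext^2_R(M,L)$ is GV-torsion. The idea is to resolve the \emph{second} variable rather than $M$: choose a free resolution $\cdots\to F_2\to F_1\to L\to 0$, let $K_0=L$ and let $K_i$ denote its $i$-th syzygy, so that each $F_i$ is flat. Dimension shifting in the long exact sequences of $\Ext_R(M,-)$ attached to the short exact sequences $0\to K_{i+1}\to F_{i+1}\to K_i\to 0$ expresses $\Ext^2_R(M,L)$, up to the groups $\Ext^{2+i}_R(M,F_{i+1})$ (which are GV-torsion because $M$ is \emph{strongly} $w$-copure projective and the $F_{i+1}$ are flat), in terms of $\Ext^{n+1}_R(M,K_{n-1})$. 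The latter is GV-torsion because $w$-$\mathrm{sd}(M)\le n$. Peeling the extensions back through the closure properties of the GV-torsion class yields that $\Ext^2_R(M,L)$ is GV-torsion, whence $w$-$\mathrm{sd}(M)\le 1$.

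The implication $(3)\Rightarrow(1)$ is the same manoeuvre in degree one. Given $L$ arbitrary, take a short exact sequence $0\to K\to F\to L\to 0$ with $F$ free. In the long exact sequence of $\Ext_R(M,-)$ the group $\Ext^1_R(M,F)$ is GV-torsion because $M$ is $w$-copure projective and $F$ is flat, while $\Ext^2_R(M,K)$ is GV-torsion because $w$-$\mathrm{sd}(M)\le 1$ forces $\Ext^2_R(M,-)$ to be GV-torsion by the Ext-characterisation. Since $\Ext^1_R(M,L)$ is an extension of a submodule of $\Ext^2_R(M,K)$ by a quotient of $\Ext^1_R(M,F)$, it is GV-torsion; as $L$ was arbitrary, $M$ is $w$-split. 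I want to stress that the device of resolving the test module $L$ is what lets the two different test classes interact: the hypotheses only give control of $\Ext_R(M,-)$ on \emph{flat} modules (through $w$-copure projectivity) and on \emph{arbitrary} modules in the single top degree (through finite $w$-split dimension), and the free resolution of $L$ converts the former into the latter.

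For the finitely presented case I would descend to the $w$-Nagata ring $R\{X\}$, using two standard facts from \cite{FHT}: a module $T$ is GV-torsion if and only if $T_{\mathfrak p}=0$ for every $\mathfrak p\in w\text{-}\Max(R)$ (equivalently $T\{X\}=0$), and the maximal ideals of $R\{X\}$ are exactly the $\mathfrak p\{X\}$ with $\mathfrak p\in w\text{-}\Max(R)$. For finitely presented $M$ localisation commutes with $\Ext^1$, so $M$ being $w$-split says precisely that $\Ext^1_{R_{\mathfrak p}}(M_{\mathfrak p},T)=0$ for every $\mathfrak p\in w\text{-}\Max(R)$ and every $R_{\mathfrak p}$-module $T$ (each such $T$ is realised as a localised test module via $R_{\mathfrak p}\otimes_R R_{\mathfrak p}=R_{\mathfrak p}$); thus each $M_{\mathfrak p}$ is free over the local ring $R_{\mathfrak p}$. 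Since $M\{X\}$ is finitely presented and free at every maximal ideal of $R\{X\}$, it is projective, giving $(1)\Rightarrow(4)$, and the converse reverses this local computation. The main obstacle I anticipate is exactly this transfer step: one must ensure that the Ext-characterisation of $w$-$\mathrm{sd}$ from \cite{REF} is available for arbitrary modules, and that the local-freeness criterion over $R\{X\}$ is legitimately applied to the genuinely finitely presented module $M\{X\}$; by contrast the homological heart of the argument is the clean resolve-the-second-variable trick used for $(2)\Rightarrow(3)$ and $(3)\Rightarrow(1)$.
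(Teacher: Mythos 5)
Your treatment of the equivalence of (1), (2), (3) is correct and is essentially the paper's argument: both rest on resolving the test module $L$ by free modules and playing the GV-torsionness of $\Ext^i_R(M,F)$ for $F$ flat against the Ext-characterisation of $w$-$\mathrm{sd}$ from \cite[Proposition 3.3]{REF}. The only organisational difference is that you close the cycle through $(2)\Rightarrow(3)$ by an iterated dimension shift, while the paper proves $(2)\Rightarrow(1)$ and $(3)\Rightarrow(1)$ directly (each time landing at $w$-splitness via \cite[Proposition 2.4]{Wang and Lie}); the homological content is identical.

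For $(1)\Leftrightarrow(4)$ you take a genuinely different route, and it contains a gap in the direction $(1)\Rightarrow(4)$. You assert that for finitely presented $M$ ``localisation commutes with $\Ext^1$,'' and use this to pass from the vanishing of $\bigl(\Ext^1_R(M,N)\bigr)_{\mathfrak p}$ to the vanishing of $\Ext^1_{R_{\mathfrak p}}(M_{\mathfrak p},T)$ for every $R_{\mathfrak p}$-module $T$. But for a module that is merely finitely presented the comparison map $\bigl(\Ext^1_R(M,N)\bigr)_{\mathfrak p}\rightarrow \Ext^1_{R_{\mathfrak p}}(M_{\mathfrak p},N_{\mathfrak p})$ is only \emph{injective}: writing $0\rightarrow K\rightarrow F\rightarrow M\rightarrow 0$ with $F$ finitely generated free, $K$ is finitely generated but need not be finitely presented, so $\bigl(\Hom_R(K,N)\bigr)_{\mathfrak p}\rightarrow\Hom_{R_{\mathfrak p}}(K_{\mathfrak p},N_{\mathfrak p})$ need not be surjective, and neither is the induced map on cokernels. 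Surjectivity requires $R$ coherent or $M$ of type $\mathrm{FP}_2$, neither of which is assumed. (Note that your $(4)\Rightarrow(1)$ only uses the injectivity, so that direction is fine.) The conclusion that $M_{\mathfrak p}$ is free is nonetheless true, but it needs a different justification: for instance, the extension class of $0\rightarrow K\rightarrow F\rightarrow M\rightarrow 0$ lies in the GV-torsion module $\Ext^1_R(M,K)$, hence is killed by some $J\in GV(R)$; since $J\not\subseteq\mathfrak p$ for $\mathfrak p\in w\text{-}\Max(R)$, the sequence splits after localising at $\mathfrak p$, so $M_{\mathfrak p}$ is a summand of $F_{\mathfrak p}$ and hence free. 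The paper sidesteps all of this by routing through $w$-projectivity: $w$-split implies $w$-projective, and for finite-type modules $w$-projectivity is equivalent to projectivity of $M\{X\}$ over $R\{X\}$ (\cite[Theorems 6.6.24, 6.7.13, 6.7.18]{KIMBOOK}), with \cite[Proposition 2.5]{REF} returning from $w$-projective to $w$-split in the finitely presented case. Your localisation picture is more transparent, but as written the transfer step you yourself flagged as the main obstacle is exactly where the argument breaks.
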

\begin{proof}
	$(1)\Rightarrow (2)$ and $(1)\Rightarrow (3)$ hold by Lemma \ref{w-split} and \cite[Proposition 3.3]{REF}.
	
	$(2)\Rightarrow (1)$ Let $X$ be an $R$-module and suppose $w$-$\rm sd$$(M)=n< \infty$. Consider an exact sequence $0\rightarrow N\rightarrow F\rightarrow X\rightarrow 0$ with $F$ free, so we have the exact sequence
	$$ \Ext_R^n(M,F)\rightarrow \Ext_R^n(M,X)\rightarrow \Ext_R^{n+1}(M,N).$$
	The left term is GV-torsion since $M$ is a strongly $w$-copure projective $R$-module and the right term is GV-torsion since $w$-$\rm sd$$(M)$ = $n$ by \cite[Proposition 3.3]{REF}. Hence, $\Ext_R^n(M,X)$ is GV-torsion, so $M$ is $w$-split by \cite[Proposition 2.4]{Wang and Lie}.
	
	$(3)\Rightarrow (1)$ Similar to the proof of $(2)\Rightarrow (1)$.
	
	$(1)\Rightarrow (4)$ Trivial by \cite[Theorem 6.7.13]{KIMBOOK}, and since every $w$-split module is $w$-projective.
		
	$(4)\Rightarrow (1)$ Since $M$ is a finitely presented $R$-module, by \cite[Theorem 6.6.24]{KIMBOOK}, $M\{X\}$ is finitely generated. Thus, by \cite[Theorem 6.7.18]{KIMBOOK}, $M$ is $w$-projective and by \cite[Proposition 2.5]{REF} $M$ is $w$-split.
\end{proof}

Next, we will give an example of a $w$-copure projective (resp., strongly $w$-copure projective) module which is not a $w$-projective (and so not $w$-split) module.
\begin{example}\label{example}
	Let $R$ be a QF-ring, but not semisimple. For example, $R:=k[X]/(X^2)$ where $k$ is a field. Then, every flat module is injective (since, $R$ is QF) so every $R$-module is $w$-copure projective (resp., strongly $w$-copure projective). Since $R$ is not semisimple, there exists an $R$-module which is not $w$-projective by \cite[Theorem 3.15]{FHT}, (and so not $w$-split).	
\end{example}
Recall from \cite{wange qi} that an $R$-module $M$ is called a strong $w$-module if $\Ext^i_R(N,M)=0$ for any GV-torsion module $N$ and any $i \geq 1$. In \cite{wange qi}, $\PP_{w}^{\dagger}$ denotes the class of GV-torsion-free $R$-modules $N$ with the property that $\Ext^k_R(M,N)=0$ for all $w$-projective $R$-modules $M$ and for all integers $k\geq1$ Clearly, every GV-torsionfree injective $R$-module belongs to $\PP_{w}^{\dagger}$. We note that if $R$ is a $DW$ ring, then every  $R$-module is in $\PP_{w}^{\dagger}$.
\begin{proposition}\label{carw4}
	Let $R$ be a ring. The following statements are equivalent:
	\begin{enumerate}
		\item  $R$ is a $DW$ ring;
		\item  Every (strongl) $w$-copure projective $R$-module is  a strong $w$-module;
		\item  Every (strongl) $w$-copure projective $R$-module is a $w$-module.
	\end{enumerate}
\end{proposition}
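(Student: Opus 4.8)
The plan is to prove the cycle $(1) \Rightarrow (2) \Rightarrow (3) \Rightarrow (1)$, handling the ``$w$-copure projective'' and ``strongly $w$-copure projective'' versions simultaneously. The first two implications are soft; the genuine content lies in $(3) \Rightarrow (1)$.

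For $(1) \Rightarrow (2)$ I would observe that the implication is essentially vacuous: if $R$ is a $DW$-ring then $GV(R) = \{R\}$, so the only GV-torsion $R$-module is $(0)$. Hence for \emph{any} $R$-module $M$ and any GV-torsion module $N$ we have $N = 0$, whence $\Ext^i_R(N,M) = 0$ for all $i \geq 1$; that is, $M$ is a strong $w$-module. A fortiori every (strongly) $w$-copure projective module is a strong $w$-module. For $(2) \Rightarrow (3)$ I would simply invoke the fact recalled in the introduction (from \cite{wange qi}) that every strong $w$-module is a $w$-module, so the conclusion of $(2)$ entails that of $(3)$ for the same class of modules.

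The heart of the argument is $(3) \Rightarrow (1)$, which I would prove contrapositively. Suppose $R$ is not a $DW$-ring; then there is a $GV$-ideal $J \neq R$, and $R/J$ is a nonzero module that is GV-torsion (each element is annihilated by $J \in GV(R)$) and finitely presented (since $J$ is finitely generated). First I would show $R/J$ is strongly $w$-copure projective. Using the finite presentation of $R/J$, localization commutes with $\Ext$, so for any flat $R$-module $F$ and any $\mathfrak{m} \in w\text{-}{\rm Max}(R)$ one obtains $\left(\Ext^i_R(R/J,F)\right)_{\mathfrak{m}} \cong \Ext^i_{R_{\mathfrak{m}}}\!\left((R/J)_{\mathfrak{m}}, F_{\mathfrak{m}}\right)$. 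Since a $GV$-ideal is never contained in a maximal $w$-ideal (its $w$-closure is all of $R$, whereas $\mathfrak{m}$ is a proper $w$-ideal), we get $J_{\mathfrak{m}} = R_{\mathfrak{m}}$ and hence $(R/J)_{\mathfrak{m}} = 0$, forcing the localized $\Ext$ to vanish; by the standard characterization of GV-torsion modules as those vanishing at every maximal $w$-ideal, $\Ext^i_R(R/J,F)$ is GV-torsion for all $i \geq 1$. Thus $R/J$ is strongly $w$-copure projective, and therefore also $w$-copure projective (immediate from the definitions). On the other hand $R/J$ is a nonzero GV-torsion module, so it is not GV-torsion-free and hence not a $w$-module. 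This produces a (strongly) $w$-copure projective module that fails to be a $w$-module, negating $(3)$ in either version and completing the contrapositive.

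The step I expect to be the main obstacle is this localization computation: one must carefully justify that $\Ext$ commutes with localization here --- which is exactly where finite generation of $J$ (equivalently, finite presentation of $R/J$) is used --- and that every $GV$-ideal localizes to the unit ideal at each maximal $w$-ideal. Both are standard facts of $w$-module theory, but they are precisely what makes the single module $R/J$ simultaneously strongly $w$-copure projective and non-$w$, which is the crux of the whole equivalence.
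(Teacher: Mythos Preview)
Your overall architecture matches the paper's: both prove the cycle $(1)\Rightarrow(2)\Rightarrow(3)\Rightarrow(1)$ and, for $(3)\Rightarrow(1)$, both exhibit $R/J$ with $J\in GV(R)$ as a (strongly) $w$-copure projective module that cannot be a $w$-module unless $J=R$. The paper reaches ``$R/J$ is strongly $w$-copure projective'' by quoting that $R/J$ is $w$-split \cite[Corollary~2.7]{REF} and then invoking Lemma~\ref{w-split}, while you argue directly via localization; likewise, for $(1)\Rightarrow(2)$ the paper cites \cite[Proposition~2.3]{wange qi}, whereas you simply note that over a $DW$-ring the only GV-torsion module is zero. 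Your route is thus more self-contained.

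There is, however, one technical slip in your $(3)\Rightarrow(1)$. Finite presentation of $R/J$ does \emph{not} by itself guarantee that localization commutes with $\Ext^i_R(R/J,-)$ for all $i\geq 1$ over an arbitrary (possibly non-coherent) ring; that would require a resolution of $R/J$ by finitely generated projectives out to degree~$i+1$, which finite generation of $J$ alone does not supply. The correct and simpler justification bypasses the isomorphism entirely: since $J$ annihilates $R/J$, it annihilates $\Ext^i_R(R/J,N)$ for every $N$ and every $i$ (the $R$-action on $\Ext^i$ can be computed through either variable), so each $\Ext^i_R(R/J,N)$ is already GV-torsion --- indeed for \emph{all} $N$, not just flat ones. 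This is exactly the content of the $w$-split result the paper cites, so with this fix your argument is complete and essentially equivalent to the paper's.
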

\begin{proof}
	$(1)\Rightarrow (2)$ Since $R$ is $DW$, every $R$-module is in $\PP_{w}^{\dagger}$. By \cite[Proposition 2.3]{wange qi}, every $R$-module is a strong $w$-module.
	
	$(2)\Rightarrow (3)$ Trivial, since every strong $w$-module is a $w$-module.

	$(3)\Rightarrow (1)$ Let $J \in GV(R)$, so $R/J$ is $w$-split by \cite[Corollary 2.7]{REF} . Hence, $R/J$ is (strongl) $w$-copure projective by Lemma \ref{w-split}. Hence, $R/J$ is a $w$-module (and so GV-torsion-free). Then, $R/J=0$, so $GV(R)=\{R\}$. Hence, $R$ is a $DW$ ring.
\end{proof}

The next result characterizes rings with finite weak global dimensions over which every strongly $w$-copure projective module is strongly copure projective.
\begin{theorem}\label{carw5}
	Let $R$ be a ring with finite weak global dimension. The following statements are equivalent:
	\begin{enumerate}
		\item  $R$ is a $DW$ ring;
		\item  Every strongly $w$-copure projective $R$-module is projective;
		\item  Every strongly $w$-copure projective $R$-module is strongly copure projective.
	\end{enumerate}
\end{theorem}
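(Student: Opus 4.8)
The plan is to reduce everything to a single observation: over a ring of finite weak global dimension, the strongly copure projective modules are exactly the projective modules. Once this is in hand, all three implications become short. First I would record the easy direction, that a projective module $M$ satisfies $\Ext_R^i(M,N)=0$ for every $i\geq 1$ and every $N$, so in particular it is strongly copure projective. For the substantial converse, suppose $M$ is strongly copure projective and set $d=\wgldim(R)<\infty$. Given an arbitrary module $X$, its flat dimension is at most $d$, so there is a flat resolution $0\to F_d\to \cdots \to F_0\to X\to 0$. Cutting this into short exact sequences and dimension-shifting, using at each stage that $\Ext_R^{j}(M,F_i)=0$ for all $j\geq 1$ because each $F_i$ is flat and $M$ is strongly copure projective, yields $\Ext_R^1(M,X)\cong \Ext_R^{d+1}(M,F_d)=0$. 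As $X$ was arbitrary, $M$ is projective. This is the step where the real work happens, and the place I expect any obstacle to lie; note that it genuinely uses the word \emph{strongly} in strongly copure projective, since the shift requires Ext-vanishing on flats all the way up to degree $d+1$, not merely in degree $1$.

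With this key observation established, $(1)\Rightarrow(2)$ is immediate: if $R$ is $DW$ then $GV(R)=\{R\}$, so the only GV-torsion module is $(0)$; hence for a strongly $w$-copure projective $M$ the modules $\Ext_R^i(M,N)$ (with $N$ flat, $i\geq 1$) are not merely GV-torsion but zero, i.e.\ $M$ is strongly copure projective, and therefore projective. The implication $(2)\Rightarrow(3)$ is then formal, since every projective module is strongly copure projective.

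For $(3)\Rightarrow(1)$ I would route through Proposition \ref{carw4}. Assuming $(3)$, any strongly $w$-copure projective module is strongly copure projective, hence projective by the key observation; and since projective modules are $w$-modules (as recalled in the introduction), every strongly $w$-copure projective module is a $w$-module. This is precisely condition $(3)$ of Proposition \ref{carw4} in its ``strongly'' form, which forces $R$ to be a $DW$ ring. Alternatively one can argue directly: for $J\in GV(R)$ the module $R/J$ is $w$-split, hence strongly $w$-copure projective, hence (by $(3)$ together with the key observation) projective and thus a $w$-module, so GV-torsion-free; since $R/J$ is at the same time GV-torsion, we get $R/J=0$ and $J=R$, giving $GV(R)=\{R\}$.
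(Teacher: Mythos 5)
Your proposal is correct and follows essentially the same route as the paper: the paper's proof also reduces everything to the fact that over a ring of finite weak global dimension a strongly copure projective module is projective, citing \cite[Proposition 3.4]{fu zh} for it, whereas you re-derive that fact by the standard dimension-shifting argument along a finite flat resolution; and your second, direct argument for $(3)\Rightarrow(1)$ via $R/J$ being $w$-split, hence projective, hence GV-torsion-free, is word for word the paper's. The only cosmetic difference is that you make the key lemma self-contained instead of quoting it.
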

\begin{proof}
	$(1)\Rightarrow (2)$ Let $M$ be a strongly $w$-copure projective $R$-module, so $M$ is strongly copure projective since $R$ is $DW$. Then, by \cite[Proposition 3.4]{fu zh}, we have $M$ is a projective $R$-module.
	
	$(2)\Rightarrow (3)$ Trivial, since every projective $R$-module is strongly copure projective.
	
	$(3)\Rightarrow (1)$ Let $J \in GV(R)$, so $R/J$ is $w$-split by \cite[Corollary 2.7]{REF}. Hence, $R/J$ is strongly $w$-copure projective by Lemma \ref{w-split} and so  strongly copure projective. Hence, by \cite[Proposition 3.4]{fu zh}, $R/J$ is projective (and so GV-torsion-free). Then, $R/J=0$, so $GV(R)=\{R\}$. Thus, $R$ is a $DW$ ring.
\end{proof}
\begin{remark}\label{remark}
	\begin{enumerate}
		\item The condition $\wgldim(R)<\infty$ in Theorem \ref{carw5} is necessary, because there exists a $DW$ ring $R$ such that a strongly $w$-copure projective $R$-module is not projective (for example QF ring is not semisimple, see Example \ref{example}).
		
		\item  In general, we can not obtain every strongly $w$-copure projective $R$-module is projective under the condition that $\wgldim(R)<\infty$. For example, let $(R,\m)$ be a regular local ring with $\gldim(R)=n \geq 2$, so $\wgldim(R)<\infty$. Suppose that every strongly $w$-copure projective $R$-module is projective so strongly $w$-copure projective $R$-module is a $w$-module. Hence, by Proposition \ref{carw4}, $R$ is a $DW$ ring, and this is a contradiction by \cite[Example 2.6]{TRB}.
	\end{enumerate}
\end{remark}
In the next example, we will give an example of a strongly $w$-copure projective module, which is not strongly copure projective.

\begin{example}\label{example1} Let $(R, \m)$ be a regular local ring with ${\rm gldim}(R) =n$ ($n\geq 2$). By \cite[Example 2.6]{TRB}, $R$ is not a $DW$ ring. Hence, by Theorem \ref{carw5}, there exists a strongly $w$-copure projective module which is not a strongly copure projective module. Otherwise, if every strongly $w$-copure projective module is strongly copure projective, $R$ is $DW$ by Theorem \ref{carw5}, a contradiction.
\end{example}

\begin{proposition}\label{exactpure}
	Let $R$ be a  ring and  $\xymatrix{ 0\rightarrow A\rightarrow B\rightarrow C\rightarrow 0}$ be an exact sequence of $R$-modules, where $C$ is a strongly ($w$-copure projective) $R$-module. Then, $A$ is a strongly ($w$-copure projective) $R$-module if and only if $B$ is a strongly ($w$-copure projective) $R$-module.
\end{proposition}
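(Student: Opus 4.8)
The plan is to run a standard two-out-of-three argument through the long exact $\Ext$-sequence, using that the class of GV-torsion modules is closed under submodules, quotients, and extensions. Recall that $\tor_{GV}$ is the torsion radical of a hereditary torsion theory (see \cite{HFX} and \cite{KIMBOOK}): if $0\rightarrow T'\rightarrow T\rightarrow T''\rightarrow 0$ is exact with $T'$ and $T''$ GV-torsion, then $T$ is GV-torsion, and any submodule or quotient of a GV-torsion module is again GV-torsion. This closure property is the only structural input I need beyond the definition of strongly $w$-copure projective.

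Fix a flat $R$-module $N$. Applying $\Hom_R(-,N)$ to the given exact sequence $0\rightarrow A\rightarrow B\rightarrow C\rightarrow 0$ yields, for every $i\geq 1$, the exact sequence
$$\Ext_R^i(C,N)\rightarrow \Ext_R^i(B,N)\rightarrow \Ext_R^i(A,N)\rightarrow \Ext_R^{i+1}(C,N).$$
Throughout, the terms involving $C$ are GV-torsion by hypothesis, where one notes that $i+1\geq 2\geq 1$, so the degree-$(i+1)$ term is still controlled.

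For the implication ``$B$ strongly $w$-copure projective $\Rightarrow$ $A$ strongly $w$-copure projective'', I would read off the segment $\Ext_R^i(B,N)\rightarrow \Ext_R^i(A,N)\rightarrow \Ext_R^{i+1}(C,N)$. Its outer terms are GV-torsion (the left because $B$ is strongly $w$-copure projective, the right because $C$ is), so $\Ext_R^i(A,N)$ sits in a short exact sequence whose kernel is a quotient of $\Ext_R^i(B,N)$ and whose cokernel is a submodule of $\Ext_R^{i+1}(C,N)$; closure under subquotients and extensions forces $\Ext_R^i(A,N)$ to be GV-torsion. For the converse, ``$A\Rightarrow B$'', I use the segment $\Ext_R^i(C,N)\rightarrow \Ext_R^i(B,N)\rightarrow \Ext_R^i(A,N)$, whose outer terms are now GV-torsion because $C$ and $A$ are strongly $w$-copure projective, and the same closure argument shows $\Ext_R^i(B,N)$ is GV-torsion. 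Since $N$ and $i\geq 1$ were arbitrary, both directions follow.

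I do not expect a genuine obstacle: the argument is formal once the hereditary-torsion closure of the GV-torsion class is in hand. The only point to watch is the index bookkeeping — in the first direction the right-hand term is $\Ext^{i+1}$ rather than $\Ext^i$, but since $C$ is \emph{strongly} $w$-copure projective we control all positive degrees, so this is harmless; it is precisely here that strong $w$-copure projectivity, rather than mere $w$-copure projectivity, is used. The identical proof, with ``GV-torsion'' replaced by ``equal to zero'', handles the parenthetical strongly copure projective case, since the zero module is trivially closed under subquotients and extensions.
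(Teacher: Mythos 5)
Your proof is correct and follows essentially the same route as the paper: both hinge on the long exact $\Ext$-sequence induced by $0\rightarrow A\rightarrow B\rightarrow C\rightarrow 0$ applied against a flat module $N$, with the terms involving $C$ (including the degree-$(i+1)$ term, which is exactly where strong $w$-copure projectivity of $C$ is needed) controlled by hypothesis. The only cosmetic difference is that the paper verifies the two-out-of-three step by localizing at each maximal $w$-ideal $\mathfrak{m}$ and using that a module is GV-torsion iff all such localizations vanish, obtaining $(\Ext_R^i(A,N))_{\mathfrak{m}}\cong(\Ext_R^i(B,N))_{\mathfrak{m}}$, whereas you invoke the equivalent closure of the GV-torsion class under subquotients and extensions.
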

\begin{proof}
	Let $N$ be a flat $R$-module. Then, we have the exact sequence,
	$$\xymatrix{ \Ext_{R}^{i}(C,N)\rightarrow \Ext_{R}^{i}(B,N)\rightarrow \Ext_{R}^{i}(A,N)\rightarrow \Ext_{R}^{i+1}(C,N).}$$
	By [\cite{FHT}, Lemma 0.1], for any maximal $w$-ideal $\m$, we have the exact sequence,
	$$\xymatrix{ 0=(\Ext_{R}^{i}(C,N))_{\m}\rightarrow (\Ext_{R}^{i}(B,N))_{\m}\rightarrow (\Ext_{R}^{i}(A,N))_{\m}\rightarrow (\Ext_{R}^{i+1}(C,N))_{\m}}=0.$$
	Thus, $(\Ext_{R}^{i}(A,N))_{\m}\cong (\Ext_{R}^{i}(B,N))_{\m}$. Hence, $\Ext_{R}^{i}(A,N)$ is a $GV$-torsion $R$-module if and only if $\Ext_{R}^{i}(B,N)$ is a $GV$-torsion $R$-module (by [\cite{FHT}, Lemma 0.1]). Thus, $A$ is a strongly $w$-copure projective $R$-module if and only if $B$ is a strongly $w$-copure projective $R$-module.
\end{proof}

\begin{proposition}\label{class}
	Let $R$ be a ring. Then the class of all $w$-copure projective (resp., strongly $w$-copure projective) $R$-modules is closed under direct sums and direct summands.
\end{proposition}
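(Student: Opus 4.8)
The plan is to reduce both assertions to closure properties of the class of $GV$-torsion modules, exploiting that $\Ext_R^i(-,N)$ sends a direct sum in its first variable to a direct product: for any family $\{M_\lambda\}_{\lambda\in\Lambda}$ of $R$-modules, any flat $R$-module $N$, and any $i\geq 1$,
\[
\Ext_R^i\Big(\bigoplus_{\lambda}M_\lambda,\,N\Big)\;\cong\;\prod_{\lambda}\Ext_R^i(M_\lambda,N).
\]
I would treat the $w$-copure projective case ($i=1$) and the strongly $w$-copure projective case (all $i\geq1$) by the same argument, carried out degree by degree. The case of direct summands is the easy one: if $A$ is a summand of a (strongly) $w$-copure projective module $M=A\oplus B$, the split maps $A\hookrightarrow M\twoheadrightarrow A$ induce $\Ext_R^i(A,N)\to\Ext_R^i(M,N)\to\Ext_R^i(A,N)$ with identity composite, so $\Ext_R^i(A,N)$ is a direct summand, hence a submodule, of the $GV$-torsion module $\Ext_R^i(M,N)$. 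Since a submodule of a $GV$-torsion module is visibly $GV$-torsion (read off the definition of $\tor_{GV}$), $A$ is again (strongly) $w$-copure projective.

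For direct sums the displayed isomorphism reduces the problem to showing that $\prod_{\lambda}\Ext_R^i(M_\lambda,N)$ is $GV$-torsion once every factor is. When $\Lambda$ is finite this is immediate: if $x_\lambda$ is killed by $J_\lambda\in GV(R)$, then the product ideal $\prod_\lambda J_\lambda$ is again a $GV$-ideal and annihilates the whole tuple, so a finite product of $GV$-torsion modules is $GV$-torsion. For an infinite index set this element-wise argument breaks down, and in fact an arbitrary product of $GV$-torsion modules need not be $GV$-torsion; this is the main obstacle of the proof. To bypass it I would abandon element-wise testing and instead use the localization criterion [\cite{FHT}, Lemma 0.1] already invoked in Proposition \ref{exactpure}: a module is $GV$-torsion if and only if it vanishes after localizing at every maximal $w$-ideal $\m$.

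Accordingly it is enough to prove $\big(\Ext_R^i(\bigoplus_\lambda M_\lambda,N)\big)_\m=0$ for each $\m\in w\text{-}\Max(R)$ and each flat $N$. Here localization should be applied to the direct sum itself, where it is harmless: $(\bigoplus_\lambda M_\lambda)_\m\cong\bigoplus_\lambda (M_\lambda)_\m$ over $R_\m$, and every flat $R_\m$-module arises as some $N_\m$ with $N$ flat over $R$. From $M_\lambda$ being (strongly) $w$-copure projective one gets $\Ext_R^i(M_\lambda,N)_\m=0$, which should translate into each $(M_\lambda)_\m$ being copure projective over $R_\m$; and over the fixed ring $R_\m$ copure projectivity \emph{is} preserved by arbitrary direct sums, because there the relevant products are products of zero modules. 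Thus $\Ext_{R_\m}^i(\bigoplus_\lambda (M_\lambda)_\m,N_\m)=0$, and reassembling through [\cite{FHT}, Lemma 0.1] yields that $\Ext_R^i(\bigoplus_\lambda M_\lambda,N)$ is $GV$-torsion. The one genuinely delicate point — and the step I expect to require the most care — is the comparison between $\Ext_R^i(\bigoplus_\lambda M_\lambda,N)_\m$ and $\Ext_{R_\m}^i(\bigoplus_\lambda (M_\lambda)_\m,N_\m)$: localization does not commute with the infinite product hidden inside $\Ext$, so this identification must be justified (for instance by reducing to finitely generated submodules, or under a finite-presentation hypothesis on the $M_\lambda$) rather than taken for granted.
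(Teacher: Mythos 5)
Your treatment of direct summands and of \emph{finite} direct sums is correct, and the finite case is in fact the entire scope of the paper's own proof: the paper takes two modules $M,N$, invokes $\Ext^i_R(M\oplus N,F)\cong \Ext^i_R(M,F)\oplus\Ext^i_R(N,F)$, and concludes via the localization criterion of \cite[Lemma 0.1]{FHT} that the left-hand side is GV-torsion if and only if both summands on the right are. Your alternative argument for the finite case --- a tuple is killed by the product of the GV-ideals killing its entries, and a product of GV-ideals is again a GV-ideal --- is an equally valid and more elementary route to the same conclusion, and your summand argument (a retract, hence a submodule, of a GV-torsion module is GV-torsion) is exactly what is needed there.

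Where you go beyond the paper, namely arbitrary direct sums, your own misgivings are justified and the proposed repair does not work. The identification $\bigl(\Ext^i_R(\bigoplus_\lambda M_\lambda,N)\bigr)_{\m}\cong \Ext^i_{R_{\m}}\bigl(\bigoplus_\lambda (M_\lambda)_{\m},N_{\m}\bigr)$, and likewise $\Ext^i_R(M_\lambda,N)_{\m}\cong\Ext^i_{R_{\m}}((M_\lambda)_{\m},N_{\m})$, requires finiteness hypotheses that are not available here; compare Proposition \ref{propo4}, where the paper needs $R$ coherent and $M$ finitely presented to make precisely this commutation. Moreover, no argument can close this gap, because the infinite-sum statement is false in general: over $R=k[[x,y]]$ with $\m=(x,y)$, each ideal $\m^n$ is $w$-copure projective by Corollary \ref{coro}, since $\Ext^1_R(\m^n,R)\cong\Ext^2_R(R/\m^n,R)$ has finite length; but one checks that the annihilator of this module is exactly $\m^n$, so a suitably chosen element of $\prod_n\Ext^1_R(\m^n,R)\cong\Ext^1_R\bigl(\bigoplus_n\m^n,R\bigr)$ is annihilated by no GV-ideal (every GV-ideal of $R$ contains a power of $\m$), and $\bigoplus_n\m^n$ is not $w$-copure projective. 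Hence ``direct sums'' in the proposition must be read as finite direct sums, which is what both you and the paper actually prove.
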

\begin{proof}
	Let $M$ and $N$ be two $R$-modules, and $F$ a flat $R$-module. By \cite[Theorem 3.3.9]{KIMBOOK}, $\Ext_{R}^{i}(M\oplus N,F)\cong\Ext_{R}^{i}(M,F)\oplus\Ext_{R}^{i}(N,F)$ for any $i\geq1$. Hence, by \cite[Lemma 0.1]{FHT}, $\Ext_{R}^{i}(M\oplus N,F)$ is GV-torsion if and only if $\Ext_{R}^{i}(M,F)$ and $\Ext_{R}^{i}(N,F)$ are GV-torsion. Thus, $M\oplus N$ is $w$-copure projective (resp., strongly $w$-copure projective) if and only if $M$ and $N$ are $w$-copure projective (resp., strongly $w$-copure projective).
\end{proof}

\begin{proposition}\label{caracst}
	Let $R$ be a ring. The following statements are equivalent:
	\begin{enumerate}
		\item  $M$ is $w$-copure projective (resp., strongly $w$-copure projective);
		\item  $\Hom_{R}(P,M)$ is $w$-copure projective (resp., strongly $w$-copure projective) for any finitely generated projective $R$-module $P$;
		\item  $P\otimes M$ is $w$-copure projective (resp., strongly $w$-copure projective) for any finitely presented projective $R$-module $P$.
	\end{enumerate}
\end{proposition}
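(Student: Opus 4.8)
The plan is to deduce all three equivalences from a single structural fact—that a finitely generated projective module (equivalently, since these notions coincide for projectives, a finitely presented projective module) is a direct summand of a finite free module $R^n$—combined with Proposition \ref{class}, which tells us the class of (strongly) $w$-copure projective modules is closed under finite direct sums and under direct summands. This reduces everything to bookkeeping with additive functors, so no new $\Ext$ computation beyond what Proposition \ref{class} already packages will be needed.

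First I would dispose of the easy implications $(2)\Rightarrow(1)$ and $(3)\Rightarrow(1)$ by specializing to $P=R$, which is finitely generated and finitely presented projective. Using the natural isomorphisms $\Hom_R(R,M)\cong M$ and $R\otimes_R M\cong M$, condition $(2)$ (resp. $(3)$) applied with $P=R$ says precisely that $M$ is (strongly) $w$-copure projective, so $(1)$ follows at once.

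For $(1)\Rightarrow(2)$, let $P$ be finitely generated projective and pick $Q$ with $P\oplus Q\cong R^n$. Applying the additive contravariant functor $\Hom_R(-,M)$ yields $\Hom_R(P,M)\oplus\Hom_R(Q,M)\cong\Hom_R(R^n,M)\cong M^n$, so $\Hom_R(P,M)$ is a direct summand of $M^n$. Assuming $M$ is (strongly) $w$-copure projective, Proposition \ref{class} first gives that $M^n$ is (strongly) $w$-copure projective (closure under finite direct sums) and then that its summand $\Hom_R(P,M)$ is as well (closure under direct summands). The implication $(1)\Rightarrow(3)$ is entirely parallel, replacing $\Hom_R(-,M)$ by the additive functor $-\otimes_R M$: from $P\oplus Q\cong R^n$ we get $(P\otimes_R M)\oplus(Q\otimes_R M)\cong R^n\otimes_R M\cong M^n$, exhibiting $P\otimes_R M$ as a direct summand of $M^n$, to which Proposition \ref{class} again applies.

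I do not expect a genuine obstacle here; the argument is essentially formal. The only points deserving care are: (i) noting that for projective modules the ``finitely generated'' hypothesis of $(2)$ and the ``finitely presented'' hypothesis of $(3)$ pick out the same class, so that in each case $P$ really is a summand of a finite free module; and (ii) running the plain and the ``strongly'' versions simultaneously, which is legitimate because the functorial isomorphisms above and Proposition \ref{class} hold verbatim for all $\Ext_R^i(-,F)$ with $F$ flat at once. An alternative route through the adjunction isomorphism $\Ext_R^i(P\otimes_R M,N)\cong\Ext_R^i(M,\Hom_R(P,N))$ is available, but it would force us to check separately that $\Hom_R(P,N)$ is flat when $N$ is, so the direct-summand argument via Proposition \ref{class} is the cleaner path.
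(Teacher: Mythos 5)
Your proof is correct, but it takes a genuinely different route from the paper's. Where you reduce everything to the splitting $P\oplus Q\cong R^n$, the additivity of $\Hom_R(-,M)$ and $-\otimes_R M$, and the closure properties of Proposition \ref{class}, the paper instead invokes two natural isomorphisms from Wang--Kim: for $(1)\Rightarrow(2)$ it uses $P\otimes \Ext_{R}^n(M,F)\cong \Ext_{R}^n(\Hom_{R}(P,M),F)$ and observes that tensoring a GV-torsion module with anything stays GV-torsion, and for $(1)\Rightarrow(3)$ it uses $\Ext_{R}^n(P\otimes M,F)\cong \Hom_{R}(P,\Ext_{R}^n(M,F))$ followed by localization at each maximal $w$-ideal $\m$ together with the criterion that a module is GV-torsion iff it vanishes at all such $\m$. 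The implications $(2)\Rightarrow(1)$ and $(3)\Rightarrow(1)$ via $P=R$ are identical in both treatments. Your argument is the more elementary one: it needs no Ext--tensor adjunction identities and no localization, only the formal direct-summand bookkeeping, and your remark that finitely generated and finitely presented projective modules form the same class correctly handles the mismatch in hypotheses between $(2)$ and $(3)$. What the paper's route buys is a pointwise description of the relevant Ext modules (useful if one later wants quantitative information, or wants to relax ``projective'' to hypotheses under which $P$ need not split off a finite free module), but for the statement as given your argument is complete and arguably cleaner.
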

\begin{proof}
	$(1)\Rightarrow (2)$ Let $P$ be a finitely generated projective $R$-module and let $F$ be a flat $R$-module. By  \cite[Theorem 3.3.12]{KIMBOOK},  $P\otimes \Ext_{R}^n(M,F)\cong \Ext_{R}^n(\Hom_{R}(P,M),F)$ for any $n\geq 1$. Since, $M$ is $w$-copure projective (resp., strongly $w$-copure projective), $\Ext_{R}^n(M,F)$ is GV-torsion. Hence, $\Ext_{R}^n(\Hom_{R}(P,M),F)$ is GV-torsion for any $n\geq 1$. Thus, $\Hom_{R}(P,M)$ is $w$-copure projective (resp., strongly $w$-copure projective).
	
	$(1)\Rightarrow (3)$ Let $P$ be a finitely presented projective $R$-module and let $F$ be a flat $R$-module. By  \cite[Theorem 3.3.10]{KIMBOOK}, $\Ext_{R}^n(P\otimes M,F)\cong \Hom_{R}(P,\Ext_{R}^n(M,F))$ for any $n\geq 1$. Hence, $(\Ext_{R}^n(P\otimes M,F))_{\m}\cong (\Hom_{R}(P,\Ext_{R}^n(M,F)))_{\m}$
	for any maximal $w$-ideal $\m$ of $R$ and by \cite[Theorem 2.6.16]{KIMBOOK}, $$(\Ext_{R}^n(P\otimes M,F))_{\m}\cong \Hom_{R_{\m}}(P_{\m},(\Ext_{R}^n(M,F))_{\m}).$$  By \cite[Theorem 6.2.15]{KIMBOOK} and since $M$ is $w$-copure projective (resp., strongly $w$-copure projective), we have $(\Ext_{R}^n(M,F))_{\m}=0$. Hence, $\Hom_{R_{\m}}(P_{\m},(\Ext_{R}^n(M,F))_{\m})=0$, and so $(\Ext_{R}^n(P\otimes M,F))_{\m}=0$. By \cite[Theorem 6.2.15]{KIMBOOK} again we have $\Ext_{R}^n(P\otimes M,F)$ is GV-torsion, which implies that, $ P\otimes M$ is $w$-copure projective (resp., strongly $w$-copure projective).
	
	$(2)\Rightarrow (1)$ and $(3)\Rightarrow (1)$ These follow immediately by taking $P:=R$.
\end{proof}

\begin{lemma}\label{lemma}
	Let $R$ be a ring and  $M$ be an $R$-module. The following statements are equivalent:
	\begin{enumerate}
		\item  $M$ is $w$-copure projective;
		\item  For any exact sequence $0\rightarrow A\rightarrow B\rightarrow C\rightarrow 0$  with $A$ flat, the exact sequence $0\rightarrow \Hom_{R}(M,A)\rightarrow \Hom_{R}(M,B)\rightarrow \Hom_{R}(M,C)\rightarrow 0$ is $w$-exact;
		\item  For any exact sequence $0\rightarrow L\rightarrow E\rightarrow M\rightarrow 0$  and any flat $R$-module $N$, the exact sequence $0\rightarrow \Hom_{R}(M,N)\rightarrow \Hom_{R}(E,N)\rightarrow \Hom_{R}(L,N)\rightarrow 0$ is $w$-exact.
	\end{enumerate}
\end{lemma}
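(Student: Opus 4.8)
The plan is to prove the two equivalences $(1)\Leftrightarrow(2)$ and $(1)\Leftrightarrow(3)$ by feeding the given short exact sequences into the long exact sequence for $\Ext$ and translating $w$-exactness into a GV-torsion statement via \cite[Lemma 0.1]{FHT}, i.e. the fact that a module $X$ is GV-torsion if and only if $X_{\mathfrak p}=0$ for every maximal $w$-ideal $\mathfrak p$.

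First I would record the reduction on which everything rests. Because $\Hom_R(M,-)$ and $\Hom_R(-,N)$ are left exact, the three-term sequences in (2) and (3) are automatically exact at their first two spots, and they remain so after localizing at any maximal $w$-ideal $\mathfrak p$, since localization is exact and commutes with cokernels. Consequently each such Hom-sequence is $w$-exact if and only if the cokernel of its right-hand map is GV-torsion. The connecting homomorphism of the long exact $\Ext$-sequence then identifies this cokernel with a submodule of the relevant $\Ext^1$: for (2) it is $\Ker\bigl(\Ext_R^1(M,A)\to\Ext_R^1(M,B)\bigr)\subseteq\Ext_R^1(M,A)$, and for (3) it is $\Ker\bigl(\Ext_R^1(M,N)\to\Ext_R^1(E,N)\bigr)\subseteq\Ext_R^1(M,N)$.

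Granting this, the forward implications $(1)\Rightarrow(2)$ and $(1)\Rightarrow(3)$ are immediate: if $M$ is $w$-copure projective and the coefficient module ($A$, respectively $N$) is flat, then the ambient $\Ext_R^1$ is GV-torsion, hence so is any submodule of it by \cite[Lemma 0.1]{FHT}, so the cokernel is GV-torsion and the Hom-sequence is $w$-exact for every choice of the short exact sequence. For the converses I would choose the short exact sequence so as to annihilate the obstructing $\Ext^1$ of the auxiliary module. For $(2)\Rightarrow(1)$, given a flat module $A$, embed it in an injective module $B$ to obtain $0\to A\to B\to C\to 0$; then $\Ext_R^1(M,B)=0$, so the cokernel is all of $\Ext_R^1(M,A)$, which (2) forces to be GV-torsion. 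For $(3)\Rightarrow(1)$, given a flat module $N$, take $0\to L\to E\to M\to 0$ with $E$ free; then $\Ext_R^1(E,N)=0$, the cokernel equals $\Ext_R^1(M,N)$, and (3) makes it GV-torsion. Either way $M$ is $w$-copure projective.

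I do not expect a genuine obstacle here; the content is bookkeeping once the reduction is in place. The only delicate point is the equivalence between $w$-exactness of the three-term Hom-sequence and GV-torsion-ness of a specific $\Ext^1$ (or a submodule thereof): this uses only that localization is exact and commutes with cokernels, together with the localization characterization of GV-torsion, and crucially does not require $\Hom$ to commute with localization. It is also worth flagging that the forward implications need the submodule formulation because they must hold for \emph{every} short exact sequence, whereas each converse uses a single well-chosen sequence.
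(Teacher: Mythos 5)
Your proposal is correct and follows essentially the same route as the paper: both arguments localize the long exact $\Ext$-sequence at maximal $w$-ideals, use the characterization of GV-torsion modules as those whose localizations at maximal $w$-ideals vanish, and for the converses pick the same auxiliary sequences (an injective embedding of the flat module for $(2)\Rightarrow(1)$, a projective/free presentation of $M$ for $(3)\Rightarrow(1)$). Your extra bookkeeping identifying the cokernel with a submodule of the relevant $\Ext^1$ is a harmless repackaging of what the paper does by localizing the four-term exact sequence directly.
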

\begin{proof}
	$(1)\Rightarrow (2)$ Let $\m$ be a maximal $w$-ideal of $R$. Tensoring the exact sequence $0\rightarrow\Hom_{R}(M,A)\rightarrow \Hom_{R}(M,B)\rightarrow \Hom_{R}(M,C) \rightarrow \Ext_{R}^1(M,A)$ with $R_{\m}$, we have the exact sequence $0\rightarrow(\Hom_{R}(M,A))_{\m}\rightarrow (\Hom_{R}(M,B))_{\m}\rightarrow (\Hom_{R}(M,C))_{\m}\rightarrow (\Ext_{R}^1(M,A))_{\m}$. Since $M$ is $w$-copure projective, $\Ext_{R}^1(M,A)$ is GV-torsion and by \cite[Theorem 6.2.15]{KIMBOOK}, $(\Ext_{R}^1(M,A))_{\m}=0$. Hence, the sequence $0\rightarrow(\Hom_{R}(M,A))_{\m}\rightarrow (\Hom_{R}(M,B))_{\m}\rightarrow (\Hom_{R}(M,C))_{\m}\rightarrow 0$ is exact and so, $0\rightarrow\Hom_{R}(M,A)\rightarrow \Hom_{R}(M,B)\rightarrow \Hom_{R}(M,C)\rightarrow 0$ is $w$-exact.
	
	$(2)\Rightarrow (1).$ Let $N$ be a flat $R$-module, and consider an exact sequence $0\rightarrow N\rightarrow E\rightarrow A\rightarrow 0$ where $E$ is an injective module. Thus, by tensoring the exact sequence
	$0\rightarrow {\rm Hom}_R(M,N) \rightarrow {\rm Hom}_R(M,E)\rightarrow {\rm Hom}_R(M,A)\rightarrow {\rm Ext}_R^1(M,N)\rightarrow0$, with $R_{\mathfrak{m}}$ (for any maximal $w$-ideal $\mathfrak{m}$ of $R$), and keeping in mind that $0\rightarrow {\rm Hom}_R(M, N)\rightarrow {\rm Hom}_R(M,E)\rightarrow {\rm Hom}_R(M,A)\rightarrow 0$ is $w$-exact, we can deduce that ${\rm Ext}_R^1(M,N)$ is $GV$-torsion, which implies that $M$ is $w$-copure projective.
	
	$(1)\Rightarrow (3)$ Let $0\rightarrow L\rightarrow E\rightarrow M\rightarrow 0$ be an exact sequence. For any flat $R$-module $N$ and any maximal $w$-ideal $\m$ of $R$, we have the exact sequence $0\rightarrow(\Hom_{R}(M,N))_{\m}\rightarrow (\Hom_{R}(E,N))_{\m}\rightarrow (\Hom_{R}(L,N))_{\m}\rightarrow (\Ext_{R}^1(M,N))_{\m}$. Since $M$ is  $w$-copure projective, $\Ext_{R}^1(M,N)$ is GV-torsion and by \cite[Theorem 6.2.15]{KIMBOOK}, $(\Ext_{R}^1(M,N))_{\m}=0$. Hence, we have  the exact sequence $0\rightarrow(\Hom_{R}(M,N))_{\m}\rightarrow (\Hom_{R}(E,N))_{\m}\rightarrow (\Hom_{R}(L,N))_{\m}\rightarrow 0$ and so the sequence $0\rightarrow \Hom_{R}(M,N)\rightarrow \Hom_{R}(E,N)\rightarrow \Hom_{R}(L,N)\rightarrow 0$ is $w$-exact.
	
	$(3)\Rightarrow (1)$ Let  $0\rightarrow K\rightarrow P\rightarrow M\rightarrow 0$ be an exact sequence with $P$ projective. For any flat $R$-module $N$ we have the exact sequence, $0\rightarrow \Hom_{R}(M,N)\rightarrow \Hom_{R}(P,N)\rightarrow \Hom_{R}(K,N)\rightarrow \Ext_{R}^1(M,N)\rightarrow 0$
	and keeping in mind that $0\rightarrow \Hom_{R}(M,N)\rightarrow \Hom_{R}(P,N)\rightarrow \Hom_{R}(K,N)\rightarrow 0$ is $w$-exact, we can deduce by \cite[Theorem 6.2.15]{KIMBOOK} that $\Ext_{R}^1(M,N)$ is GV-torsion. Thus, $M$ is $w$-copure projective.
\end{proof}

In \cite{wange qi}, Wang and Qiao defined the strong $w$-module as the following: A GV-torsion-free module $M$ is said to be a strong $w$-module if $\Ext^i_R(N,M)=0$ for each integer $i\geq 1$ and for all GV-torsion modules $N$, so every strong $w$-module is $w$-module. Hence, by this definition we have the following result.
\begin{proposition}\label{prop}
	Let $0\rightarrow A\rightarrow F\rightarrow C\rightarrow 0$ be a $w$-exact sequence of $R$-modules with $F$ strongly $w$-copure projective. Then, for any strong $w$-module flat $N$ and all integers $n\geq 1$, $\Ext_{R}^n(A,N)$ is GV-torsion if and only if so is $ \Ext_{R}^n(C,N)$.
\end{proposition}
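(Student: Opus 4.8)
The plan is to sidestep the usual difficulty that $\Ext$ does not commute with localization for modules that are not finitely presented, and instead to argue entirely over $R$ with honest short exact sequences, using the strong $w$-module hypothesis to erase every GV-torsion discrepancy produced by the $w$-exactness. The first thing I would record is an auxiliary vanishing: if $T$ is a GV-torsion module and $N$ is a strong $w$-module, then $\Ext_R^i(T,N)=0$ for \emph{every} $i\geq 0$. For $i\geq 1$ this is exactly the defining property of a strong $w$-module, while for $i=0$ it holds because $N$ is GV-torsion-free: any $h\colon T\to N$ sends an element $x$ with $Jx=0$ ($J\in GV(R)$) to an element with $Jh(x)=0$, hence $h(x)\in\tor_{GV}(N)=0$. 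This vanishing is precisely what permits deleting GV-torsion submodules and quotients from the first argument of $\Ext(-,N)$ without affecting anything.

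Writing $f\colon A\to F$ and $g\colon F\to C$, I would set $K=\Ker g$ and $I=\Ima g$. Because the sequence is $w$-exact (and, as a complex, localizes to an exact sequence at every maximal $w$-ideal), the three modules $\Ker f$, $K/\Ima f$ and $\Coker g=C/I$ are all GV-torsion. I would then split off the honest short exact sequences
$$0\to \Ker f\to A\to \Ima f\to 0,\qquad 0\to \Ima f\to K\to K/\Ima f\to 0,\qquad 0\to I\to C\to C/I\to 0,$$
apply $\Hom_R(-,N)$, and invoke the vanishing above to obtain natural isomorphisms $\Ext_R^n(A,N)\cong \Ext_R^n(K,N)$ and $\Ext_R^n(C,N)\cong \Ext_R^n(I,N)$ for all $n\geq 0$. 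The whole problem is thereby reduced to the genuinely exact sequence $0\to K\to F\to I\to 0$, with no localization of $\Ext$ ever invoked.

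Finally I would feed this last sequence into the long exact sequence
$$\cdots \to \Ext_R^n(F,N)\to \Ext_R^n(K,N)\to \Ext_R^{n+1}(I,N)\to \Ext_R^{n+1}(F,N)\to\cdots,$$
localize at an arbitrary maximal $w$-ideal $\m$ (localization being exact over $R$), and use that $\Ext_R^n(F,N)$ and $\Ext_R^{n+1}(F,N)$ are GV-torsion for $n\geq 1$, since $F$ is strongly $w$-copure projective and $N$ is flat; by \cite[Theorem 6.2.15]{KIMBOOK} their localizations at $\m$ vanish. The four-term exact sequence of localizations then collapses to an isomorphism between the localizations of $\Ext_R^n(K,N)$ and $\Ext_R^{n+1}(I,N)$, which, translated back through the reductions, compares the GV-torsion-ness of $\Ext_R^n(A,N)$ with that of $\Ext_R^{\bullet}(C,N)$.

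The step I expect to be the main obstacle is the degree bookkeeping in this last passage. The connecting homomorphism of $0\to K\to F\to I\to 0$ naturally links $\Ext_R^n(A,N)$ to $\Ext_R^{n+1}(C,N)$ rather than to $\Ext_R^n(C,N)$, so matching the indices to the exact form of the statement is the delicate point: I would need to pin down the convention for $w$-exact sequences (in particular that the composite $gf$ is zero, which is what makes the middle homology $K/\Ima f$ GV-torsion rather than merely GV-torsion after localization) and verify the indexing against it. A secondary point worth double-checking is that the reduction isomorphisms genuinely hold in \emph{all} degrees $n\geq 0$, including $n=0$, since that is exactly where the $i=0$ case of the vanishing lemma is needed.
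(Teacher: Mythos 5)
Your argument is correct and is, at bottom, the paper's own argument: the paper simply cites Wang--Qiao's Lemma 2.1 to produce the long exact sequence
$$\cdots\rightarrow\Ext_R^n(F,N)\rightarrow\Ext_R^n(A,N)\rightarrow\Ext_R^{n+1}(C,N)\rightarrow\Ext_R^{n+1}(F,N)\rightarrow\cdots$$
attached to a $w$-exact sequence and a strong $w$-module flat $N$, and then performs exactly your final step (localize at a maximal $w$-ideal, kill the two $F$-terms using strong $w$-copure projectivity of $F$ together with flatness of $N$, and invoke \cite[Theorem 6.2.15]{KIMBOOK}). What you add is a self-contained derivation of that cited lemma: the vanishing $\Ext_R^i(T,N)=0$ for all $i\geq 0$ when $T$ is GV-torsion and $N$ is a strong $w$-module (the case $i=0$ coming from GV-torsion-freeness of $N$) is precisely what lets you strip off $\Ker f$, $\Ker g/\Ima f$ and $\Coker g$ and reduce to the honest short exact sequence $0\rightarrow\Ker g\rightarrow F\rightarrow\Ima g\rightarrow 0$. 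This is sound, granted the standard convention that a $w$-exact sequence is in particular a complex (so $gf=0$ and $\Ima f\subseteq\Ker g$); under the paper's bare ``localizations are exact'' definition you would only know that $\Ima(gf)$ is GV-torsion, and the decomposition would need a minor patch, so your instinct to pin down the convention is right. Finally, your worry about the degree bookkeeping is well founded, but the fault lies with the proposition as stated rather than with your proof: both your argument and the paper's own proof establish that $\Ext_R^n(A,N)$ is GV-torsion if and only if $\Ext_R^{n+1}(C,N)$ is, and the statement's ``$\Ext_R^n(C,N)$'' is an indexing slip --- no rearrangement of the connecting homomorphisms removes the shift.
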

\begin{proof}
	Let $0\rightarrow A\rightarrow F\rightarrow C\rightarrow 0$ be a $w$-exact sequence, by \cite[Lemma 2.1]{wange qi}, for any strong $w$-module flat $N$, there exists an exact sequence $$(*)\cdots\cdots\cdots\cdots\cdots \Ext_{R}^n(F,N)\rightarrow \Ext_{R}^n(A,N) \rightarrow\Ext_{R}^{n+1}(C,N)\rightarrow \Ext_{R}^{n+1}(F,N)$$
	Let $\m$ be a maximal $w$-ideal of $R$. Tensoring the exact sequence $(*)$ with $R_{\\m}$, we have the exact sequence
	$(\Ext_{R}^n(F,N))_{\m}\rightarrow (\Ext_{R}^n(A,N))_{\m} \rightarrow (\Ext_{R}^{n+1}(C,N))_{\m}\rightarrow (\Ext_{R}^{n+1}(F,N))_{\m}$. Since $F$ is strongly $w$-copure projective, the left and the right term equal zero. Hence, $(\Ext_{R}^n(A,N))_{\m}\cong (\Ext_{R}^{n+1}(C,N))_{\m}$. Thus, by \cite[Theorem 6.2.15]{KIMBOOK} $\Ext_{R}^n(A,N)$ is GV-torsion if and only if $\Ext_{R}^{n+1}(C,N)$ is GV-torsion.
\end{proof}

Recall from \cite[Theorem 2.6]{SXFW} that an $R$-module $A$ is said to be absolutely $w$-pure if and only if $\Ext_R^1(N,A)$ is a $GV$-torsion $R$-module for every finitely presented $R$-module $N$.

\begin{proposition}\label{carwqf2}
	Let $R$ be a coherent ring. The following statements are equivalent:
	\begin{enumerate}
		\item  Every $R$-module is strongly $w$-copure projective;
		\item  Every finitely presented $R$-module is strongly $w$-copure projective;
		\item  Every flat $R$-module is absolutely $w$-pure.		
	\end{enumerate}
\end{proposition}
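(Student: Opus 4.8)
The plan is to establish $(1)\Rightarrow(2)\Rightarrow(3)\Rightarrow(1)$, with only the last implication requiring real work. The implication $(1)\Rightarrow(2)$ is immediate. For $(2)\Rightarrow(3)$, recall from \cite[Theorem 2.6]{SXFW} that a flat module $N$ is absolutely $w$-pure exactly when $\Ext_R^1(P,N)$ is GV-torsion for every finitely presented $P$; since $(2)$ makes each such $P$ strongly $w$-copure projective, this is immediate from the definition. I would also record the reverse reduction $(3)\Rightarrow(2)$, which is where coherence first enters: over a coherent ring every syzygy of a finitely presented module is again finitely presented, so dimension shifting gives $\Ext_R^n(P,N)\cong\Ext_R^1(\Omega^{n-1}P,N)$ with $\Omega^{n-1}P$ finitely presented, and this is GV-torsion for every flat $N$ by $(3)$. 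Hence $(2)\Leftrightarrow(3)$, and the theorem reduces to showing that $(2)$ forces every module to be strongly $w$-copure projective.

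For this last step I would first reduce the \emph{strong} assertion to a single $\Ext^1$-assertion. From $0\to\Omega M\to P\to M\to0$ with $P$ projective one gets $\Ext_R^n(M,N)\cong\Ext_R^1(\Omega^{n-1}M,N)$, so $M$ is strongly $w$-copure projective as soon as \emph{every} module is $w$-copure projective (apply this to the syzygies $\Omega^{n-1}M$). Thus it suffices to prove that $\Ext_R^1(M,N)$ is GV-torsion for every module $M$ and every flat $N$. By \cite[Theorem 6.2.15]{KIMBOOK} this can be tested at the maximal $w$-ideals $\m$, and since $R_\m$ is flat over $R$ and again coherent, flat base change for $\Ext$ of a finitely presented first argument turns $(3)$ into the statement that $N_\m$ is FP-injective over $R_\m$; as every finitely presented (resp. flat) $R_\m$-module is the localization of a finitely presented (resp. flat) $R$-module, this says that every flat $R_\m$-module is FP-injective over the coherent local ring $R_\m$.

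To pass from finitely presented to an arbitrary $M$ I would write $M=\varinjlim M_i$ as a filtered colimit of finitely presented modules and use the telescope presentation $0\to\bigoplus_i M_i\xrightarrow{1-\mathrm{shift}}\bigoplus_i M_i\to M\to0$. By Proposition \ref{class} the term $\bigoplus_i M_i$ is strongly $w$-copure projective, so $\Ext_R^1(\bigoplus_i M_i,N)\cong\prod_i\Ext_R^1(M_i,N)$ is GV-torsion, and the higher groups $\Ext_R^n(M,N)$ with $n\ge2$ are then GV-torsion by the same localization of the long exact sequence used in Proposition \ref{exactpure}. Feeding the telescope into the long exact sequence obtained from $\Hom_R(-,N)$ shows that, localized at any maximal $w$-ideal $\m$, the only term of $\Ext_R^1(M,N)$ that need not vanish is the image of $\varprojlim^1\Hom_R(M_i,N)$.

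The main obstacle is precisely this $\varprojlim^1$ contribution: neither $\Hom_R(-,N)$ nor $\Ext_R^1(-,N)$ commutes with the colimit $\varinjlim M_i$ or with localization at $\m$ once the first argument is no longer finitely presented, so the vanishing of the individual $(\Ext_R^1(M_i,N))_\m$ cannot simply be transported to $M$. This is where coherence must be used decisively a second time: over a coherent ring arbitrary products of flat modules are flat and the class of FP-injective modules is closed under direct limits, and after localizing at $\m$ — where $N_\m$ is both flat and FP-injective over the coherent ring $R_\m$ — these facts should force the inverse system $\bigl(\Hom_{R_\m}((M_i)_\m,N_\m)\bigr)_i$ to be Mittag--Leffler, so that its $\varprojlim^1$ is GV-torsion. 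Granting this, $(\Ext_R^1(M,N))_\m=0$ for every maximal $w$-ideal $\m$, whence $\Ext_R^1(M,N)$ is GV-torsion; every module is then $w$-copure projective, and by the first reduction every module is strongly $w$-copure projective, giving $(1)$.
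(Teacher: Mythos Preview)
Your reduction to $(2)\Rightarrow(1)$ and your proof of $(2)\Leftrightarrow(3)$ via syzygies over a coherent ring are both sound, and you are right that this last implication is where the real content lies. The gap is in the final step: you assert that the Mittag--Leffler property of the \emph{localized} system $\bigl(\Hom_{R_{\m}}((M_i)_{\m},N_{\m})\bigr)_i$ would make $\bigl(\varprojlim^{1}\Hom_R(M_i,N)\bigr)_{\m}$ vanish, but neither $\Hom_R(-,N)$ nor $\varprojlim^{1}$ commutes with localization in general---a point you yourself flag two sentences earlier---so even a trivially Mittag--Leffler localized system says nothing about the localization of the original $\varprojlim^{1}$. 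The phrase ``should force'' is carrying weight it cannot bear.

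Indeed the implication $(2)\Rightarrow(1)$ is false as stated. Take $R$ to be a von Neumann regular ring that is not semisimple, for instance an infinite product of fields. Then $R$ is coherent and has Krull dimension zero, hence is a $DW$-ring, so GV-torsion simply means zero. Every finitely presented $R$-module is projective, so $(2)$ holds; every module is both flat and absolutely pure, so $(3)$ holds. But $(1)$ would require $\Ext_R^{i}(M,N)=0$ for every $M$ and every flat---hence every---$N$, forcing $R$ to be semisimple, a contradiction. The paper's own argument for $(3)\Rightarrow(1)$ in fact opens with ``Let $M$ be a finitely presented $R$-module'' and therefore establishes only $(3)\Rightarrow(2)$; the passage to arbitrary $M$ is missing there as well, and this counterexample shows it cannot be supplied. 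What survives is the equivalence of $(2)$ and $(3)$ together with the trivial $(1)\Rightarrow(2)$.
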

\begin{proof}
	$(1)\Rightarrow (2)$ Obvious.
	
	$(2)\Rightarrow (3)$ Let $N$ be a flat $R$-module. For any finitely presented $R$-module $F$, we have $\Ext_{R}^i(F,N)$ is GV-torsion for any $i\geq1$. Hence, $N$ is absolutely $w$-pure by [\cite{RBT}, Lemma 2].
	
	$(3)\Rightarrow (1)$ Let $M$ be a finitely presented $R$-module. For any flat $R$-module $N$, we have by [\cite{RBT}, Lemma 2] again, $\Ext_{R}^i(M,N)$ is GV-torsion for any $i\geq1$. Thus, $N$ is $w$-copure projective.
\end{proof}

We end this section with the following characterizations of a QF ring (a ring $R$ is called quasi-Frobenius ring if $R$ is a self-injective Noetherian ring).
\begin{proposition}\label{carwqf}
	Let $R$ be a ring. The following statements are equivalent:
	\begin{enumerate}
		\item  Every $R$-module is strongly $w$-copure projective;
		\item  Every $R$-module is $w$-copure projective;
		\item  Every finitely presented $R$-module is $w$-copure projective;
		\item  Every flat $R$-module is $w$-injective;
		\item  Every flat $R$-module is absolutely $w$-pure.		
		
		If $R$ is a Krull domain, then the above four conditions are equivalent to:
		
		\item  $R$ is QF.
	\end{enumerate}
\end{proposition}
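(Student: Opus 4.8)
The plan is to prove $(1)$--$(5)$ equivalent for an arbitrary ring by a short cycle of implications, and then to append the equivalence with $(6)$ under the Krull hypothesis. The implication $(1)\Rightarrow(2)$ is immediate (take $i=1$ in the definition of strongly $w$-copure projective), and $(2)\Rightarrow(3)$ is trivial since finitely presented modules are in particular modules. For $(2)\Rightarrow(1)$ I would run a dimension-shifting argument: given any $M$, choose a projective resolution with syzygies $\Omega^{k}M$; for every flat $N$ one has $\Ext_R^{i}(M,N)\cong\Ext_R^{1}(\Omega^{i-1}M,N)$ for all $i\geq1$, and since each syzygy $\Omega^{i-1}M$ is again an $R$-module it is $w$-copure projective by $(2)$, so every $\Ext_R^{i}(M,N)$ is GV-torsion and $M$ is strongly $w$-copure projective. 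This already gives $(1)\Leftrightarrow(2)$.

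The two ``finitely presented'' conditions are almost formal. By the definition of absolutely $w$-pure recalled just before the proposition, $(5)$ asserts that $\Ext_R^{1}(L,N)$ is GV-torsion for every finitely presented $L$ and every flat $N$, which is exactly $(3)$ with the quantifiers reordered; hence $(3)\Leftrightarrow(5)$ needs no work. To bring $(4)$ into the circle I would package the connection in a single key lemma: for a flat module $N$ the following are equivalent---(a) $\Ext_R^{1}(L,N)$ is GV-torsion for all finitely presented $L$; (b) $N$ is $w$-injective; (c) $\Ext_R^{1}(M,N)$ is GV-torsion for all $R$-modules $M$. Granting this lemma for each flat $N$, the three global statements $(5)$, $(4)$ and $(2)$ become the three items of the lemma quantified over all flat modules, so $(5)\Leftrightarrow(4)\Leftrightarrow(2)$ follows at once and the cycle closes.

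The real content, and the step I expect to be the main obstacle, is the implication $(a)\Rightarrow(c)$ of that lemma: passing from ``$\Ext_R^{1}(L,N)$ GV-torsion for finitely presented $L$'' to ``$\Ext_R^{1}(M,N)$ GV-torsion for all $M$'' for a \emph{flat} $N$. A naive reduction fails, because $\Ext^{1}$ does not commute with direct limits in its first variable, so the flatness of $N$ must genuinely be used. The clean route is to detect GV-torsion locally: by \cite[Theorem 6.2.15]{KIMBOOK} it suffices to show $(\Ext_R^{1}(M,N))_{\mathfrak m}=0$ for every maximal $w$-ideal $\mathfrak m$, and I would use \cite[Lemma 0.1]{FHT} together with the characterization of $w$-injectivity to reduce the test module over $R_{\mathfrak m}$ to a finitely presented one, where hypothesis $(a)$ applies. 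Verifying that this localization step actually reduces arbitrary $M$ to finite presentation, using the flatness of $N$ and the $w$-injective characterization rather than a direct-limit formula, is the delicate point of the whole argument.

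Finally, under the assumption that $R$ is a Krull domain I would adjoin $(6)$. The implication $R$ QF $\Rightarrow$ the conditions is the easy direction: over a QF ring every flat module is injective, hence $w$-injective, so $(4)$ holds---this is exactly the mechanism used in Example~\ref{example}. For the converse I would first reduce to showing that the conditions force $R$ to be a field, since a Krull domain is QF precisely when it is a field (a QF domain is Artinian, and an Artinian domain has dimension zero, hence is a field). Assuming $(4)$, every flat module---in particular $R$ itself and its quotient field $K$---is $w$-injective, so $\Ext_R^{1}(M,R)$ is GV-torsion for all $M$. I would then exploit the divisorial structure of a Krull domain ($w=v$, and every height-one prime is a $w$-ideal) to show that a nonzero proper $w$-prime cannot coexist with this $w$-self-injectivity of $R$, forcing $\dim R=0$ and hence $R=K$ a field. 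Extracting dimension zero from the homological hypothesis via the Krull machinery is the principal difficulty of this last implication; once $R$ is known to be a field, $(6)$ is immediate.
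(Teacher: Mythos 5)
Your skeleton of implications is sensible, and several pieces are genuinely fine: $(1)\Rightarrow(2)\Rightarrow(3)$, the identification $(3)\Leftrightarrow(5)$ by unfolding the definition of absolutely $w$-pure, and your dimension-shifting proof of $(2)\Rightarrow(1)$ via syzygies, which is correct and arguably cleaner than the paper's detour (the paper instead closes that loop through condition $(4)$ using the characterization of $w$-injective modules in \cite[Theorem 3.3]{Wang and Kim2}). But the proposal has a genuine gap precisely at the step you yourself flag as ``the delicate point'': the implication (a)$\Rightarrow$(c) of your key lemma, i.e.\ that a flat module $N$ with $\Ext_R^1(L,N)$ GV-torsion for all finitely presented $L$ has $\Ext_R^1(M,N)$ GV-torsion for all $M$. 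You give no proof of this, only a hope that localizing at maximal $w$-ideals will ``reduce the test module to a finitely presented one''; no such reduction is available, since $\Ext^1(-,N)$ does not convert arbitrary first arguments into direct limits of finitely presented ones, and the classical analogue of the statement (FP-injective $\Rightarrow$ injective) is false. Since this is the only bridge in your cycle from the finitely-presented conditions $(3)/(5)$ back to the unrestricted conditions $(1)/(2)/(4)$, the equivalence is not established. The paper's own mechanism here is entirely different and non-formal: from $(5)$ it deduces that a flat module is divisible by \cite[Corollary 2.8]{SXFW} and then injective by the divisible-implies-injective theorem for Krull domains of El Baghdadi--Kim--Wang, whence $w$-injective; your localization strategy does not substitute for that input.

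The second gap is in the Krull-domain part. Your plan for $(\text{conditions})\Rightarrow(6)$ is to show directly that $R$ has dimension zero by ``exploiting the divisorial structure,'' but again no argument is given, and this is not a routine verification. The paper instead argues $(4)\Rightarrow(6)$ concretely: a projective module is $w$-injective by $(4)$, hence a divisible $w$-module by \cite[Theorem 3.9]{Wang and Kim2}, hence injective over a Krull domain, and a ring over which every projective module is injective is QF by \cite[Theorem 4.6.10]{KIMBOOK}. (Your reduction ``Krull $+$ QF $\Leftrightarrow$ field'' is true but does not by itself produce the implication you need.) In short: the easy implications and the $(2)\Leftrightarrow(1)$ equivalence are correctly handled, but the two load-bearing steps --- flat absolutely $w$-pure $\Rightarrow$ $w$-injective, and the passage to QF over a Krull domain --- are exactly where the divisibility/injectivity theory over Krull domains must be invoked, and the proposal neither proves them nor identifies the tools that would.
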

\begin{proof}
	$(1)\Rightarrow (2)$ Obvious.
	
	$(2)\Rightarrow (4)$ Let $F$ be a flat $R$-module. Then $F$ is a $w$-module by \cite[Theorem 6.7.24]{KIMBOOK}. For any $R$-module $M$, we have $\Ext_{R}^1(M,F)$ is GV-torsion since $M$ is $w$-copure projective. Hence, by \cite[Theorem 3.3]{Wang and Kim2}, $F$ is a $w$-injective module.
	
	$(4)\Rightarrow (1)$ Let $M$ be an $R$-module. For any flat $R$-module $F$, $F$ is a $w$-injective module by $(3)$. Hence, $\Ext_{R}^n(M,F)$ is GV-torsion by \cite[Theorem 3.3]{Wang and Kim2}. Thus, $M$ is strongly $w$-copure projective.
	
	$(3)\Rightarrow (5)$ Let $N$ be a flat $R$-module. For any finitely presented $R$-module $F$, we have $\Ext_{R}^1(F,N)$ is GV-torsion. Hence, $N$ is absolutely $w$-pure.
	
	$(5)\Rightarrow (3)$ Let $M$ be a finitely presented $R$-module. For any flat $R$-module $N$, we have $\Ext_{R}^1(M,N)$ is GV-torsion. Thus, $N$ is $w$-copure projective.
	
	$(2)\Rightarrow (5)$ Let $M$ be a flat $R$-module. For any finitely presented $R$-module $N$, we have $N$ is $w$-copure projective by $(2)$. Hence, $\Ext_{R}^1(M,N)$ is GV-torsion. Thus, $M$ is an absolutely $w$-pure module by \cite[Theorem 2.6]{SXFW}.
	
	$(5)\Rightarrow (4)$ Let $M$ be a flat module, so $M$ is absolutely $w$-pure by $(5)$. Hence, $M$ is divisible by \cite[Corollary 2.8]{SXFW} and by \cite[Theorem 2.6]{Said  Kim  Wang} we have $M$ is injective. Thus, $M$ is a $w$-injective module.

	$(4)\Rightarrow (6)$ Let $M$ be a projective $R$-module. Then $M$ is a $w$-injective module by $(4)$, and so $M$ is a $w$-module by \cite[Theorem 6.7.24]{KIMBOOK}. Thus, by \cite[Theorem 3.9]{Wang and Kim2}, $M$ is divisible and by \cite[Theorem 2.6]{Said  Kim  Wang} we have $M$ is injective. Hence, $R$ is QF by \cite[Theorem 4.6.10]{KIMBOOK}.
	
	$(6)\Rightarrow (4)$ Let $M$ be a flat module. Then $M$ is injective (since $R$ is QF), and so $M$ is $w$-injective.
\end{proof}

\section{\bf The $w$-copure projective module over coherent rings}
In this section, we study the $w$-copure projective (resp., strongly $w$-copure projective) modules over a coherent rings.

The next proposition is a $w$-theoretic analog of \cite[Proposition 3.9]{fu zh}.
\begin{proposition}\label{propo}
	Let $R$ be a coherent ring and $M$ be a finitely presented $R$-module. Then, $M$ is strongly $w$-copure projective if and only if $\Ext_{R}^n(M,R)$ is GV-torsion for any $n\geq 1$.
\end{proposition}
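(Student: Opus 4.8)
The plan is to reduce the whole statement to a single flat base-change isomorphism for $\Ext$, exactly as in the classical counterpart \cite[Proposition 3.9]{fu zh}. The forward implication is immediate and costs nothing: since $R$ is itself a flat $R$-module, if $M$ is strongly $w$-copure projective then $\Ext_R^n(M,R)$ is GV-torsion for every $n\geq 1$ by Definition \ref{w-copu proj}. So the content is entirely in the converse.

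For the converse, the first step is to use coherence of $R$ together with the finite presentation of $M$. I would construct a projective resolution $\cdots\to P_1\to P_0\to M\to 0$ in which every $P_i$ is a finitely generated projective (equivalently, finitely generated free) $R$-module. This is possible because over a coherent ring every syzygy of a finitely presented module is again finitely presented: starting from a short exact sequence $0\to K\to F\to M\to 0$ with $F$ finitely generated free, $K$ is finitely generated because $M$ is finitely presented, and a finitely generated submodule of a finitely generated free module over a coherent ring is finitely presented; iterating produces the desired resolution. Having each $P_i$ finitely generated projective is what makes the next step work.

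The key step is then to establish, for any flat $R$-module $N$, a natural isomorphism
$$\Ext_R^n(M,N)\cong \Ext_R^n(M,R)\otimes_R N \qquad (n\geq 1).$$
Indeed, for each finitely generated projective $P_i$ the canonical map $\Hom_R(P_i,R)\otimes_R N\to \Hom_R(P_i,N)$ is an isomorphism, and these assemble into an isomorphism of cochain complexes $\Hom_R(P_\bullet,R)\otimes_R N\cong \Hom_R(P_\bullet,N)$. Because $N$ is flat, the functor $-\otimes_R N$ is exact and hence commutes with cohomology, so taking $H^n$ of the left-hand complex yields $\Ext_R^n(M,R)\otimes_R N$, while the right-hand complex computes $\Ext_R^n(M,N)$, giving the displayed isomorphism. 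I expect this base-change isomorphism to be the main obstacle, since it is precisely the point where both hypotheses are consumed: coherence guarantees the finitely generated projective resolution, and flatness of $N$ guarantees that tensoring is exact. Everything after this is formal.

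Finally, I would verify that GV-torsion is inherited by the tensor product. Using the characterization that a module $T$ is GV-torsion if and only if $T_{\m}=0$ for every maximal $w$-ideal $\m$ (\cite[Theorem 6.2.15]{KIMBOOK}), the assumption that $\Ext_R^n(M,R)$ is GV-torsion gives
$$\bigl(\Ext_R^n(M,R)\otimes_R N\bigr)_{\m}\cong \bigl(\Ext_R^n(M,R)\bigr)_{\m}\otimes_{R_{\m}}N_{\m}=0$$
for every such $\m$, so $\Ext_R^n(M,R)\otimes_R N$ is GV-torsion. Combining this with the base-change isomorphism shows that $\Ext_R^n(M,N)$ is GV-torsion for every flat $N$ and every $n\geq 1$, which is exactly the assertion that $M$ is strongly $w$-copure projective.
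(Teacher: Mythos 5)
Your proposal is correct, and both directions land where the paper's proof does, but the converse is argued by a genuinely different mechanism. The paper invokes Lazard's theorem to write a flat module $N$ as a direct limit $\varinjlim P_i$ of finitely generated free modules, then uses the fact that $\Ext_R^n(M,-)$ commutes with direct limits for a finitely presented module over a coherent ring (\cite[Theorem 3.9.4]{KIMBOOK}) together with the commutation of localization with direct limits to get $(\Ext_R^n(M,N))_{\m}=\varinjlim(\Ext_R^n(M,P_i))_{\m}=0$ at every maximal $w$-ideal $\m$. You instead build the resolution of $M$ by finitely generated projectives explicitly from coherence and prove the flat base-change isomorphism $\Ext_R^n(M,N)\cong\Ext_R^n(M,R)\otimes_R N$ via the evaluation isomorphism $\Hom_R(P,R)\otimes_R N\cong\Hom_R(P,N)$ for finitely generated projective $P$ and the exactness of $-\otimes_R N$. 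Both arguments consume the hypotheses at exactly the same point --- coherence plus finite presentation is what supplies the resolution by finitely generated projectives --- but your route trades two quoted theorems (Lazard, and commutation of $\Ext$ with direct limits) for an elementary and self-contained tensor identity, and it yields the stronger intermediate statement that $\Ext_R^n(M,N)$ is literally $\Ext_R^n(M,R)\otimes_R N$, from which GV-torsion passes through by localizing. The paper's route is shorter on the page only because it outsources more; yours is the one I would keep if the surrounding text did not already cite those limit theorems elsewhere. One small remark: your final localization step and the paper's both ultimately rest on the same characterization of GV-torsion via vanishing at maximal $w$-ideals (\cite[Theorem 6.2.15]{KIMBOOK}), so the two proofs converge at the finish.
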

\begin{proof}
	If $M$ is a strongly $w$-copure projective $R$-module, then $\Ext_{R}^n(M,R)$ is GV-torsion for any $n\geq 1$ by definition. Conversely, assume that $\Ext_{R}^n(M,R)$ is GV-torsion for any $n\geq 1$. Since  every flat module is a direct limit of finitely generated free modules by \cite[Theorem 2.6.20]{KIMBOOK}. Let $F$ be a flat $R$-module, write $F=\varinjlim P_i$ with $P_i$ finitely generated free, so by \cite[Theorem 3.9.4]{KIMBOOK} we have $\Ext_{R}^n(M,F)=\Ext_{R}^n(M,\varinjlim P_i) \cong \varinjlim\Ext_{R}^n(M,P_i)$. Let $\m$ be a maximal $w$-ideal of $R$, by \cite[Lemma 8]{zho wang}, $(\Ext_{R}^n(M,F))_{\m}=(\varinjlim\Ext_{R}^n(M,P_i))_{\m}=\varinjlim(\Ext_{R}^n(M,P_i))_{\m}=0$. Hence, $(\Ext_{R}^n(M,F))_{\m}=0$ and by \cite[Theorem 6.2.15]{KIMBOOK}, $\Ext_{R}^n(M,F)$ is GV-torsion. Hence, $M$ is strongly $w$-copure projective.
\end{proof}
By the proof of Proposition \ref{propo}, we have the following corollary.

\begin{corollary}\label{coro}
	Let $R$ be a coherent ring and $M$ be a finitely presented $R$-module. Then, $M$ is $w$-copure projective if and only if $\Ext_{R}^1(M,R)$ is GV-torsion.
\end{corollary}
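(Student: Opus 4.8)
The plan is to run the argument of Proposition \ref{propo} verbatim, but truncated to the single cohomological degree $n=1$. The point I would stress first is that the proof of Proposition \ref{propo} never mixes degrees: for each fixed $n$, the GV-torsionness of $\Ext_R^n(M,F)$ is deduced only from the GV-torsionness of $\Ext_R^n(M,R)$. Hence specializing to $n=1$ is legitimate and yields exactly the statement about $w$-copure projectivity (which, by Definition \ref{w-copu proj}, is the single-degree condition $\Ext_R^1(M,N)$ GV-torsion for all flat $N$). One direction is immediate: if $M$ is $w$-copure projective, then taking the flat module $N=R$ shows $\Ext_R^1(M,R)$ is GV-torsion.

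For the converse I would assume $\Ext_R^1(M,R)$ is GV-torsion and let $F$ be an arbitrary flat $R$-module; the goal is $\Ext_R^1(M,F)$ GV-torsion. First I would write $F=\varinjlim P_i$ with each $P_i$ finitely generated free, using \cite[Theorem 2.6.20]{KIMBOOK}. Because $M$ is finitely presented and $R$ is coherent, $M$ admits a resolution by finitely generated free modules, so $\Ext_R^1(M,-)$ commutes with this direct limit by \cite[Theorem 3.9.4]{KIMBOOK}, giving $\Ext_R^1(M,F)\cong\varinjlim\Ext_R^1(M,P_i)$.

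Next I would observe that each $P_i$ is a finite direct sum of copies of $R$, so $\Ext_R^1(M,P_i)$ is a finite direct sum of copies of $\Ext_R^1(M,R)$ and is therefore GV-torsion by hypothesis. Localizing at an arbitrary maximal $w$-ideal $\m$ and using that direct limits commute with localization (\cite[Lemma 8]{zho wang}), I get $(\Ext_R^1(M,F))_{\m}=\varinjlim(\Ext_R^1(M,P_i))_{\m}=0$, the last equality by the torsion characterization \cite[Theorem 6.2.15]{KIMBOOK}. Since $\m$ was arbitrary, \cite[Theorem 6.2.15]{KIMBOOK} again gives that $\Ext_R^1(M,F)$ is GV-torsion, so $M$ is $w$-copure projective.

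I do not expect a genuine obstacle here, since the corollary is literally the $n=1$ instance of the proposition's mechanism; the only thing to be careful about is the hypothesis bookkeeping, namely that the commutation of $\Ext^1$ with the direct limit genuinely uses both that $M$ is finitely presented and that $R$ is coherent, and that localization at maximal $w$-ideals both detects GV-torsion and commutes with the filtered colimit. Once these standard facts are invoked in the right order, the conclusion is forced.
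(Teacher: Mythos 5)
Your proposal is correct and is exactly what the paper intends: the paper proves the corollary by remarking ``By the proof of Proposition \ref{propo}'', i.e.\ by running that argument in the single degree $n=1$, which is precisely your strategy (write $F=\varinjlim P_i$, commute $\Ext^1_R(M,-)$ with the limit using finite presentation and coherence, and detect GV-torsion by localizing at maximal $w$-ideals). No differences worth noting.
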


In \cite{BKM}, the authors defined the $w$-copure flat (resp., strongly $w$-copure flat) modules as follows: an $R$-module $M$ is called $w$-copure flat if $\Tor^R_1(E,M)$ is a GV-torsion $R$-module for any injective $w$-module $E$, and $M$ is said to be strongly $w$-copure flat if $\Tor^R_n(E,M)$ is a GV-torsion $R$-module for any injective $w$-module $E$ and any $n\geq 1$. The authors also defined the $w$-IF-ring as follows: A ring $R$ is called a $w$-IF-ring if every injective $w$-module is $w$-flat.

Next we will give some characterizations of $w$-IF-rings by (strongly) $w$-copure projective modules.

\begin{proposition}\label{carwIF}
	Let $R$ be a coherent domain. The following conditions are equivalent:
	\begin{enumerate}
		\item  $R$ is a $w$-IF-ring;
		\item  Every finitely presented $R$-module is copure projective (resp., strongly copure projective);
		\item  Every finitely presented $R$-module is $w$-copure projective (resp., strongly $w$-copure projective);
		\item  $R$ is absolutely $w$-pure.
	\end{enumerate}
\end{proposition}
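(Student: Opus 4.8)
The plan is to establish the cycle of implications $(1)\Rightarrow(3)\Rightarrow(2)\Rightarrow(4)\Rightarrow(1)$, routing through the structural dictionary already assembled in Sections 2 and 3. Throughout, the coherence of $R$ lets me treat finitely presented modules well (finitely presented submodules, closure of the class), and the domain hypothesis ensures the torsion-theoretic comparisons are clean.

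First I would handle $(1)\Rightarrow(3)$. Assume $R$ is a $w$-IF-ring, so every injective $w$-module is $w$-flat. Let $M$ be finitely presented and let $N$ be any flat $R$-module; I want $\Ext_R^n(M,N)$ to be GV-torsion (for $n\geq 1$ in the strong case, $n=1$ otherwise). The key move is to relate flat modules $N$ to injective $w$-modules via the $w$-IF condition together with Proposition \ref{carwqf}, whose equivalence ``every flat module is $w$-injective $\Leftrightarrow$ every finitely presented module is $w$-copure projective'' is essentially the engine here; over a coherent domain the $w$-IF hypothesis should force flat modules into the absolutely $w$-pure / $w$-injective world so that $\Ext_R^n(M,N)$ vanishes after localizing at each maximal $w$-ideal $\m$. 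Concretely I would use \cite[Theorem 6.2.15]{KIMBOOK} to reduce GV-torsionness to the vanishing of $(\Ext_R^n(M,N))_{\m}$, and exploit that over $R_{\m}$ the $w$-flat/$w$-injective hypotheses become honest flat/injective statements.

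Next, $(3)\Rightarrow(2)$ is where I expect the main work, because it must convert a $w$-statement (GV-torsion $\Ext$) into a genuine statement (vanishing $\Ext$). The natural tool is the domain hypothesis combined with Corollary \ref{coro} and Proposition \ref{propo}: for finitely presented $M$ over a coherent $R$, being $w$-copure projective is detected by $\Ext_R^1(M,R)$ being GV-torsion (resp.\ all $\Ext_R^n(M,R)$ in the strong case). I would then argue that over a coherent domain the relevant $\Ext$ modules are finitely generated, so GV-torsion plus an appropriate torsion-free or rank argument forces them to be genuinely zero, upgrading $w$-copure projective to copure projective. The implication $(2)\Rightarrow(4)$ should then follow by feeding copure projectivity of all finitely presented modules into the definition of absolutely $w$-pure (via the $\Ext^1(\text{fp},-)$ characterization recalled before Proposition \ref{carwqf2}) applied to $R$ itself, using \cite[Theorem 2.6]{SXFW}.

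Finally $(4)\Rightarrow(1)$ closes the loop: if $R$ is absolutely $w$-pure, then by \cite[Corollary 2.8]{SXFW} and \cite[Theorem 2.6]{Said Kim Wang}-type results (the same divisibility-to-injectivity chain used in the proof of Proposition \ref{carwqf}) one shows injective $w$-modules are $w$-flat, i.e.\ $R$ is a $w$-IF-ring. The single hardest point I anticipate is the $w$-to-honest upgrade in $(3)\Rightarrow(2)$: GV-torsion does not in general imply zero, so the argument must genuinely use that $R$ is a domain (so that $GV(R)$ behaves tamely and torsion submodules of torsion-free modules vanish) together with coherence (to keep the $\Ext$ modules finitely generated). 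Getting that finiteness-plus-torsion-free argument exactly right, rather than hand-waving ``GV-torsion $=0$,'' is the crux.
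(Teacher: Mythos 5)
Your cycle founders at the step you yourself flag as the crux, $(3)\Rightarrow(2)$: the mechanism you propose (``the $\Ext$ modules are finitely generated, so GV-torsion plus a torsion-free or rank argument forces them to be zero'') does not exist. A finitely generated GV-torsion module over a domain need not vanish --- $R/J$ for any proper $J\in GV(R)$ is such a module --- and $\Ext^n_R(M,R)$ for finitely presented $M$ is not torsion-free in general: for $R=k[x,y]$ and $M=R/(x,y)$ one has $\Ext^2_R(M,R)\cong k$, which is GV-torsion and nonzero. So there is no direct ``$w$-to-honest upgrade'' from (3) to (2) by finiteness and torsion-freeness, and your proposal gives no other route. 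The reason the implication is nevertheless true is a collapse you have not used: by \cite[Corollary 3.6 and Proposition 3.10]{BKM}, a $w$-IF \emph{domain} is a field, hence $DW$, and then (2) is immediate. The paper accordingly runs the cycle as $(1)\Rightarrow(2)\Rightarrow(3)\Rightarrow(1)$, putting all the genuine work into $(3)\Rightarrow(1)$: for an injective $w$-module $N$ and finitely presented $F$, coherence gives $\Tor^R_n(F,N)\cong\Hom_R(\Ext^n_R(F,R),N)$, and since $\Ext^n_R(F,R)$ is GV-torsion (Proposition \ref{propo}, Corollary \ref{coro}) while $N$ is GV-torsion-free, this $\Hom$ vanishes, so $N$ is flat. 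Your $(3)\Rightarrow(2)$ can only be recovered by composing $(3)\Rightarrow(1)$ with the field collapse.

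Two further soft spots: your $(1)\Rightarrow(3)$ leans on the equivalence ``every flat module is $w$-injective $\Leftrightarrow$ every finitely presented module is $w$-copure projective'' from Proposition \ref{carwqf}, but the $w$-IF hypothesis is the \emph{opposite} inclusion (injective $w$-modules are $w$-flat), and you give no argument for why it should force flat modules to be $w$-injective; and your $(4)\Rightarrow(1)$ invokes the divisible-implies-injective chain of \cite{Said  Kim  Wang}, which is a Krull-domain result not available for a general coherent domain. The paper instead settles $(4)$ by the cheap equivalences $(3)\Leftrightarrow(4)$ via Proposition \ref{propo} and the definition of absolutely $w$-pure, keeping all the depth in $(3)\Rightarrow(1)$ and in the field collapse.
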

\begin{proof}
	$(1)\Rightarrow (2)$ Let $M$ be a finitely presented $R$-module. By \cite[Corollary 3.6 and Proposition 3.10]{BKM}, $M$ is $w$-copure flat (resp., strongly $w$-copure flat) and $R$ is a field. Thus, $R$ is $DW$ by \cite[Example 1.8.5 and Corollary 6.3.13]{KIMBOOK}. Hence, $M$ is copure flat (resp., strongly copure flat) and by \cite[Proposition 3.8]{fu zh}, we have $M$ is copure projective (resp., strongly copure projective).
	
	$(2)\Rightarrow (3)$ Trivial.
	
	$(3)\Rightarrow (1)$ Let $N$ be an injective $w$-module. For any finitely presented $R$-module $F$, we have $F$ is $w$-copure projective (resp., strongly $w$-copure projective) by $(3)$. Hence, by Proposition \ref{propo} and Corollary \ref{coro}, we have $\Ext_{R}^n(F,R)$ is GV-torsion for any $n\geq1$. Thus, by \cite[Theorem 3.9.3]{KIMBOOK}, we have $$\Tor^{R}_n(F,\Hom_{R}(R,N))\cong \Hom_{R}(\Ext_{R}^n(F,R),N).$$
	Thus, by \cite[Exersice 6.22]{KIMBOOK}, $\Hom_{R}(\Ext_{R}^n(F,R),N)=0$.
	Hence, $\Tor^{R}_n(F,N)\cong \Hom_{R}(\Ext_{R}^n(F,R),N)=0$. Thus, $N$ is a flat $R$-module by \cite[Theorem 3.4.6]{KIMBOOK}, which implies that $N$ is a $w$-flat $R$-module. Hence, $R$ is a $w$-IF-ring.
	
	$(3)\Rightarrow (4)$ By Proposition \ref{carwqf2}, we have $R$ is absolutely $w$-pure.
	
	$(4)\Rightarrow (3)$ Let $M$ be a finitely presented $R$-module. Since $R$ is absolutely $w$-pure, $\Ext_{R}^1(F,R)$ is GV-torsion. Hence, by Proposition \ref{propo}, we have $M$ is strongly $w$-copure projective.
\end{proof}

\begin{proposition}\label{coro4}
	Let $R$ be a Krull domain. Then, $R$ is QF if and only if $R$ is a $w$-IF-ring.
\end{proposition}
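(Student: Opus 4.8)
The plan is to show that, for a Krull domain, each of the two properties ``$R$ is QF'' and ``$R$ is a $w$-IF-ring'' is equivalent to the single condition that $R$ be a field; the asserted equivalence then follows at once. The substantive content is concentrated in one implication, namely that a $w$-IF domain must already be a field, while the other three implications are formal.

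First I would treat the implication $R$ QF $\Rightarrow$ $R$ is a $w$-IF-ring. By the paper's definition a QF ring is a self-injective Noetherian ring, and a self-injective Noetherian ring is Artinian. An Artinian integral domain has Krull dimension zero, hence is a field, so a QF Krull domain is a field. Over a field one has $GV(R)=\{R\}$, so $R$ is a $DW$-ring and every $R$-module is both flat and injective; in particular every injective $w$-module is flat, and therefore $w$-flat. Thus $R$ is a $w$-IF-ring. (This direction in fact uses only that $R$ is a domain.)

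For the converse $R$ $w$-IF-ring $\Rightarrow$ $R$ QF, the key step is to show that a $w$-IF domain is a field. This is precisely the fact already extracted in the proof of Proposition \ref{carwIF}: by \cite[Corollary 3.6 and Proposition 3.10]{BKM}, a $w$-IF domain is a field. Once $R$ is known to be a field it is trivially Noetherian and self-injective, hence QF. An alternative, more self-referential route is available when one grants that the Krull domain $R$ is coherent: then Proposition \ref{carwqf} gives that $R$ is QF if and only if every finitely presented $R$-module is $w$-copure projective, while Proposition \ref{carwIF} gives that $R$ is a $w$-IF-ring if and only if every finitely presented $R$-module is $w$-copure projective; comparing the two yields QF $\iff$ $w$-IF directly. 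I would, however, prefer the field argument above, since it avoids any coherence hypothesis and only uses that $R$ is a domain.

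The main obstacle is exactly the implication $w$-IF $\Rightarrow$ field: all the genuine work lives there and is imported from the BKM theory of $w$-copure flat modules and $w$-IF-rings. The remaining implications are routine, ``a QF domain is a field'' being standard Artinian-domain theory and ``field $\Rightarrow$ $w$-IF-ring'' being immediate from the triviality of the $GV$-torsion theory over a $DW$-ring.
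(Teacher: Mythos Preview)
Your proposal is correct. Both you and the paper hinge the converse on the same external fact, \cite[Proposition 3.10]{BKM}, that a $w$-IF domain is a field; after that you simply observe that a field is QF, whereas the paper takes the longer route ``field $\Rightarrow$ coherent'' and then threads through Propositions~\ref{carwIF}, \ref{carwqf2} and \ref{carwqf} to reach QF. For the forward direction the approaches genuinely differ: the paper argues $\text{QF}\Rightarrow\text{IF}\Rightarrow w\text{-IF}$, a chain valid for any ring (over a QF ring injectives are projective, hence flat, hence $w$-flat), while you instead use the domain hypothesis to pass through ``QF domain $\Rightarrow$ Artinian domain $\Rightarrow$ field $\Rightarrow w$-IF''. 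Your route is more symmetric (both directions factor through ``field'') and arguably cleaner for this proposition; the paper's forward implication has the minor advantage of not needing $R$ to be a domain at that step.
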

\begin{proof}
	Let $R$ be a QF ring. Hence $R$ is an IF-ring and so a $w$-IF-ring. Conversely, if $R$ is a $w$-IF-ring, then by \cite[Proposition 3.10]{BKM} $R$ is a field and so coherent. Hence, by Proposition \ref{carwIF}, Proposition \ref{carwqf2} and Proposition \ref{carwqf}, we have $R$ is QF.
\end{proof}

\begin{proposition}\label{propo4}
	Let $R$ be a coherent ring and $M$ be a finitely presented $R$-module. Then,
	$M$ is a strongly $w$-copure projective $R$-module if and only if $M_{\m}$ is a strongly copure projective $R_{\m}$-module for any maximal $w$-ideal $\mathfrak{m}$ of $R$.

\end{proposition}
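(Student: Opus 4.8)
The plan is to reduce the whole statement to the single localization isomorphism
$(\Ext_R^n(M,N))_{\m} \cong \Ext_{R_{\m}}^n(M_{\m},N_{\m})$, valid for every $R$-module $N$, every $n\geq 1$ and every maximal $w$-ideal $\m$, and then to feed it into the dictionary that a module is GV-torsion precisely when all its localizations at maximal $w$-ideals vanish (\cite[Theorem 6.2.15]{KIMBOOK}). So the first step I would carry out is to establish this isomorphism. Since $R$ is coherent and $M$ is finitely presented, $M$ admits a projective resolution $\cdots\rightarrow P_1\rightarrow P_0\rightarrow M\rightarrow 0$ in which every $P_i$ is a finitely generated free $R$-module. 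Localization at $\m$ is exact, so $\cdots\rightarrow (P_1)_{\m}\rightarrow (P_0)_{\m}\rightarrow M_{\m}\rightarrow 0$ is a free resolution of $M_{\m}$ over $R_{\m}$. Applying $\Hom_{R_{\m}}(-,N_{\m})$ and using that localization commutes with $\Hom$ out of finitely presented modules (\cite[Theorem 2.6.16]{KIMBOOK}), the complex $\Hom_{R_{\m}}((P_\bullet)_{\m},N_{\m})$ is isomorphic to $(\Hom_R(P_\bullet,N))_{\m}$; passing to cohomology and invoking the exactness of localization yields the desired isomorphism in every degree.

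For the forward implication I would assume $M$ strongly $w$-copure projective, fix a maximal $w$-ideal $\m$, and take an arbitrary flat $R_{\m}$-module $N$. Because $R_{\m}$ is flat over $R$ and $N$ is flat over $R_{\m}$, flatness transitivity makes $N$ flat as an $R$-module, so Proposition \ref{loca} applies and gives $\Ext_R^n(M,N)=0$ for all $n\geq 1$. Since $N$ is already an $R_{\m}$-module we have $N_{\m}=N$, and the localization formula then gives $\Ext_{R_{\m}}^n(M_{\m},N)\cong (\Ext_R^n(M,N))_{\m}=0$. As $N$ was an arbitrary flat $R_{\m}$-module, $M_{\m}$ is strongly copure projective over $R_{\m}$.

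For the converse I would assume each $M_{\m}$ is strongly copure projective over $R_{\m}$ and let $N$ be any flat $R$-module. For every maximal $w$-ideal $\m$ the base change $N_{\m}$ is flat over $R_{\m}$, so strong copure projectivity of $M_{\m}$ forces $\Ext_{R_{\m}}^n(M_{\m},N_{\m})=0$ for all $n\geq 1$; by the localization formula $(\Ext_R^n(M,N))_{\m}=0$ for every $\m$, and hence $\Ext_R^n(M,N)$ is GV-torsion by \cite[Theorem 6.2.15]{KIMBOOK}. Since $N$ was an arbitrary flat $R$-module, $M$ is strongly $w$-copure projective.

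I expect the genuine content to lie entirely in the first paragraph: the clean isomorphism $(\Ext_R^n(M,N))_{\m}\cong \Ext_{R_{\m}}^n(M_{\m},N_{\m})$ in \emph{all} degrees is exactly where both hypotheses are used, namely finite presentation of $M$ together with coherence of $R$ to guarantee a resolution of $M$ by finitely generated free modules throughout. Once that isomorphism is in hand, the two implications are routine, resting only on Proposition \ref{loca}, the transitivity of flatness, and the GV-torsion/localization correspondence.
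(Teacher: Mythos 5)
Your proposal is correct and follows essentially the same route as the paper: the forward direction via Proposition \ref{loca} applied to a flat $R_{\m}$-module viewed as a flat $R$-module, and the converse via the localization isomorphism for $\Ext$ together with the criterion that a module is GV-torsion iff its localizations at all maximal $w$-ideals vanish. The only difference is that you derive the isomorphism $(\Ext_R^n(M,N))_{\m}\cong\Ext_{R_{\m}}^n(M_{\m},N_{\m})$ from a resolution of $M$ by finitely generated free modules, whereas the paper simply cites it (\cite[Theorem 3.9.11]{KIMBOOK}).
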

\begin{proof}
	Let $N$ be a flat $R_{\m}$-module, then $N$ is a flat $R$-module (see, [\cite{KIMBOOK}, Theorem 3.8.5]). Hence, $\Ext^n_R(M,N)=0$ by Proposition \ref{loca}. Since $\Ext^n_R(M,N)$ is an $R_{\m}$-module,  by [\cite{KIMBOOK}, Theorem 3.9.11], we have
	$$ \Ext_R^n(M_{\m},N)\cong \Ext^n_R(M,N)_{\m}\cong\Ext_R^n(M,N)=0,$$ which implies that $M_{\m}$ is a strongly copure projective $R_{\m}$-module.
		
	Conversely, Let $M_{\m}$ be a strongly copure projective $R$-module for any maximal $w$-ideal $\m$ of $R$ and $N$ be a flat $R$-module, so by \cite[Theorem 2.5.12]{KIMBOOK}, $N_{\m}$ is a flat $R_{\m}$-module. Hence, by \cite[Theorem 3.9.11]{KIMBOOK} $\Ext_R^1(M,N)_{\m}\cong \Ext_{R_{\m}}^1(M_{\m},N_{\m})=0$  since $M_{\m}$ is a strongly copure projective $R_{\m}$-module. Thus,  $(\Ext_{R}^n(M,N))_{\m}=0$ which implies that $\Ext_{R}^n(M,N)$ is GV-torsion by \cite[Theorem 6.2.15]{KIMBOOK}. Hence, $M$ is strongly $w$-copure projective.
\end{proof}	
The next proposition is a $w$-theoretic analog of \cite[Proposition 3.7]{fu zh}.
\begin{proposition}\label{carw}
	Let $R$ be a coherent ring. Then,  the following conditions hold:
	\begin{enumerate}
		\item  Every (resp., strongly) $w$-copure projective $R$-module is (resp., strongly) $w$-copure flat.
		\item  Every finitely presented (resp., strongly) $w$-copure flat $R$-module is (resp., strongly) $w$-copure projective.
	\end{enumerate}
\end{proposition}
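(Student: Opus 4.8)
The plan is to transcribe the classical argument of \cite[Proposition 3.7]{fu zh}, systematically replacing the vanishing of Ext/Tor groups by the property of being $GV$-torsion, and replacing the faithfulness of Pontryagin duality ($Y^{+}=0\Leftrightarrow Y=0$) by its $w$-analogue: a module $Y$ is $GV$-torsion if and only if its character module $Y^{+}:=\Hom_{\mathbb Z}(Y,\mathbb Q/\mathbb Z)$ is $GV$-torsion. The two homological inputs are the standard duality isomorphism $\Ext_R^{n}(M,E^{+})\cong(\Tor_n^{R}(M,E))^{+}$, valid for all $M$, $E$ and $n\ge 1$, and the well-known fact that over a coherent ring the character module $E^{+}$ of an injective (hence $FP$-injective) module $E$ is flat. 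For part (2) I will instead use the finitely presented duality $\Tor_n^{R}(M,N)\cong\Hom_R(\Ext_R^{n}(M,R),N)$ for $N$ injective, exactly as in the proof of Proposition \ref{carwIF} via \cite[Theorem 3.9.3]{KIMBOOK}, combined with Proposition \ref{propo} and Corollary \ref{coro}.

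For (1), I would take $M$ (strongly) $w$-copure projective and $E$ an injective $w$-module. Since $R$ is coherent and $E$ is injective, $E^{+}$ is a flat $R$-module, so $\Ext_R^{n}(M,E^{+})$ is $GV$-torsion (for $n=1$ in the plain case, for all $n\ge 1$ in the strong case) by the definition of (strong) $w$-copure projectivity. Through the duality isomorphism this says that $(\Tor_n^{R}(M,E))^{+}$ is $GV$-torsion, and the character-module criterion then forces $\Tor_n^{R}(E,M)=\Tor_n^{R}(M,E)$ to be $GV$-torsion. As $E$ was an arbitrary injective $w$-module, $M$ is (strongly) $w$-copure flat.

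For (2), let $M$ be finitely presented and (strongly) $w$-copure flat; by Proposition \ref{propo} (resp. Corollary \ref{coro}) it suffices to show that $X_n:=\Ext_R^{n}(M,R)$ is $GV$-torsion for all $n\ge 1$ (resp. for $n=1$). Fix $n$, put $\bar X=X_n/{\rm tor}_{GV}(X_n)$, which is $GV$-torsion-free, and let $E:=E(\bar X)$ be its injective hull. Since the $GV$-torsion theory is hereditary, $E$ is $GV$-torsion-free, and being injective it satisfies $\Ext^1_R(R/J,E)=0$ for every $J\in GV(R)$; hence $E$ is an injective $w$-module. By the finitely presented duality $\Tor_n^{R}(M,E)\cong\Hom_R(X_n,E)$, and the left-hand side is $GV$-torsion because $M$ is (strongly) $w$-copure flat. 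The composite $\iota\colon X_n\twoheadrightarrow\bar X\hookrightarrow E$ is then an element of the $GV$-torsion module $\Hom_R(X_n,E)$, so some $J\in GV(R)$ kills it, i.e. $J\bar X=\iota(JX_n)=0$; as $\bar X$ is $GV$-torsion-free this yields $\bar X=0$, that is, $X_n$ is $GV$-torsion, as required.

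The main obstacle is the character-module criterion invoked in (1): passing from ``$(\Tor_n^{R}(M,E))^{+}$ is $GV$-torsion'' to ``$\Tor_n^{R}(M,E)$ is $GV$-torsion''. I expect to obtain this $w$-analogue of the faithfulness of duality from the $w$-Nagata isomorphism $\Hom_R(R\{X\},Y^{+})\cong(Y\otimes_R R\{X\})^{+}$ together with the characterization that $Y$ is $GV$-torsion iff $Y\otimes_R R\{X\}=0$ (equivalently $Y_{\mathfrak m}=0$ for every maximal $w$-ideal $\mathfrak m$, by \cite[Theorem 6.2.15]{KIMBOOK}), the delicate step being to deduce the vanishing of the relevant $\Hom$ from that of the tensor product; alternatively the criterion can be quoted from \cite{BKM}. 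Part (2) avoids this difficulty entirely, its only subtlety being the verification that $E(\bar X)$ is an injective $w$-module.
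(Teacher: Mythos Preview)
Your Part~(2) is correct and in fact a genuinely different argument from the paper's. The paper treats~(2) symmetrically to~(1): for an arbitrary flat $N$ and each maximal $w$-ideal $\mathfrak m$ it picks an injective $R_{\mathfrak m}$-module $E$ (so $E$ is an injective $w$-module over $R$), observes that $\Hom_R(N,E)$ is an injective $w$-module, and then uses the finitely-presented duality $\Hom_R(\Ext_R^{n}(M,N),E)\cong\Tor_n^{R}(M,\Hom_R(N,E))$ together with the localization identity $\Hom_R(-,E)_{\mathfrak m}\cong\Hom_{R_{\mathfrak m}}((-)_{\mathfrak m},E)$ and faithfulness of $E$ over $R_{\mathfrak m}$ to conclude $(\Ext_R^{n}(M,N))_{\mathfrak m}=0$. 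Your route via Proposition~\ref{propo}/Corollary~\ref{coro} and the injective hull $E(\bar X)$ is neat and arguably more direct; the paper's route is more uniform with Part~(1) and handles every flat $N$ at once.

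Your Part~(1), however, has the very gap you flag. The implication ``$(\Tor_n^{R}(M,E))^{+}$ is $GV$-torsion $\Rightarrow$ $\Tor_n^{R}(M,E)$ is $GV$-torsion'' is not a formality: $\Hom_{\mathbb Z}(-,\mathbb Q/\mathbb Z)$ does not commute with localization at $\mathfrak m$, and ``$Y^{+}$ is $GV$-torsion $\Leftrightarrow$ $Y$ is $GV$-torsion'' is not the usual faithfulness statement (indeed even the forward direction is suspect, since a $GV$-torsion $Y$ can satisfy $JY=Y$ for every fixed $J\in GV(R)$, so no single $J$ need kill a given $f\in Y^{+}$). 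Your proposed reduction via $\Hom_R(R\{X\},Y^{+})\cong (Y\{X\})^{+}$ only shifts the problem to showing $\Hom_R(R\{X\},T)=0$ for every $GV$-torsion $T$, which again fails in general because $T$ need not be uniformly $GV$-torsion. The paper sidesteps this entirely by replacing $\mathbb Q/\mathbb Z$ with an injective $R_{\mathfrak m}$-module $E$: then $\Hom_R(N,E)$ is flat (coherence $+$ $N$ injective), the isomorphism $\Hom_R(\Tor_n^{R}(M,N),E)\cong\Ext_R^{n}(M,\Hom_R(N,E))$ shows the left side is $GV$-torsion, hence its localization at $\mathfrak m$ vanishes, and since $E$ is already an $R_{\mathfrak m}$-module this gives $\Hom_{R_{\mathfrak m}}((\Tor_n^{R}(M,N))_{\mathfrak m},E)=0$, whence $(\Tor_n^{R}(M,N))_{\mathfrak m}=0$. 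Choosing the dualizing injective over $R_{\mathfrak m}$ rather than over $\mathbb Z$ is precisely what makes the faithfulness step go through.
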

\begin{proof}
	$(1)$ Let $N$ be an injective $w$-module and $E$ be an injective $R_{\m}$-module for any maximal $w$-ideal $\m$ of $R$. So, $E$ is an injective $R$-module by \cite[Exersice 3.16]{KIMBOOK}. Thus, $\Hom_{R}(N,E)$ is a flat $R$-module by \cite[Theorem 4.6.14]{KIMBOOK}. Let $M$ be a (resp., strongly) $w$-copure projective $R$-module, by \cite[Theorem 3.4.11]{KIMBOOK}, we have
	$$\Hom_{R}(\Tor^R_n(M,N),E)\cong \Ext_{R}^n((M,\Hom_{R}(N,E)).$$

Since $M$ is (resp., strongly) $w$-copure projective, we have $\Ext_{R}^n((M,\Hom_{R}(N,E))$ is GV-torsion. Therefore $\Hom_{R}(\Tor^R_n(M,N),E)$ is GV-torsion. It follows that $(\Hom_{R}(\Tor^R_n(M,N),E))_{\m}=0$ by \cite[Theorem 6.2.15]{KIMBOOK}. Consequently, we have $\Hom_{R_{\m}}((\Tor^R_n(M,N))_{\m},E)=0$ by \cite[Theorem 6.7.10]{KIMBOOK} and by \cite[Proposition 2.4.21]{KIMBOOK}, $(\Tor^R_n(M,N))_{\m}=0$ and by \cite[Theorem 6.2.15]{KIMBOOK} again $\Tor^R_n(M,N)$ is GV-torsion for any $n\geq 1$. Hence, $M$ is (resp., strongly) $w$-copure flat.
	
	$(2)$ Let $N$ be a flat $R$-module and $E$ be an injective $R_{\m}$-module for any maximal $w$-ideal $\m$ of $R$. So, $E$ is an injective $R$-module by \cite[Exersice 3.16]{KIMBOOK} and by \cite[Proposition 6.2.18]{KIMBOOK}, we have $E$ is a $w$-module. Hence, $\Hom_{R}(N,E)$ is an injective $w$-module by \cite[Theorem 2.5.5 and Theorem 6.1.18]{KIMBOOK}. Let $M$ be a finitely presented (resp., strongly) $w$-copure flat $R$-module. By \cite[Theorem 3.9.3]{KIMBOOK}, $$\Hom_{R}(\Ext_{R}^n(M,N),E)\cong \Tor^R_n(M,\Hom_{R}(N,E)).$$
Since $M$ is (resp., strongly) $w$-copure flat, $\Tor^R_n(M,\Hom_{R}(N,E))$ is GV-torsion and so $\Hom_{R}(\Ext_{R}^n(M,N),E)$ is GV-torsion. Hence, $(\Hom_{R}(\Ext_{R}^n(M,N),E))_{\m}=0$ by \cite[Theorem 6.2.15]{KIMBOOK} and  $(\Hom_{R}((\Ext_{R}^n(M,N))_{\m},E))=0$ by \cite[Theorem 6.7.10]{KIMBOOK}. Thus, by \cite[Proposition 2.4.21]{KIMBOOK}, $(\Ext_{R}^n(M,N))_{\m}=0$. Hence, by \cite[Theorem 6.2.15]{KIMBOOK} again $\Ext_{R}^n(M,N)$ is GV-torsion. Then, $M$ is (resp., strongly) $w$-copure projective.
\end{proof}

In the final result of this Section we will give some characterizations of semihereditary rings by the $w$-copure projective modules.
Recall that a ring $R$ is said to be semihereditary if every finitely generated ideal of $R$ is projective.
\begin{proposition}\label{semiher}
	Let $R$ be a coherent ring. The following conditions are equivalent:
	\begin{enumerate}
		\item  $R$ is  semihereditary;
		\item  Every $w$-copure projective $R$-module is flat;
		\item  Every finitely presented $w$-copure projective $R$-module is projective.
	\end{enumerate}
\end{proposition}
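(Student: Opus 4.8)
The plan is to run the cycle $(1)\Rightarrow(2)\Rightarrow(3)\Rightarrow(1)$, with the backbone being the observation that a coherent semihereditary ring is automatically a $DW$-ring. Indeed, if $J\in GV(R)$ then $J$ is finitely generated, hence projective (semihereditary), hence reflexive, so the isomorphism $R\xrightarrow{\sim}\Hom_R(J,R)=J^{\ast}$ gives $J\cong J^{\ast\ast}\cong R^{\ast}\cong R$; writing $J=bR$ with $b$ a non-zero-divisor and feeding the map $f\colon bR\to R$, $f(bs)=s$, through that isomorphism forces $b$ to be a unit, so $J=R$ and $GV(R)=\{R\}$. Over such a ring the only GV-torsion module is $0$, so $w$-copure projective and copure projective coincide.

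For $(1)\Rightarrow(2)$ I would take a $w$-copure projective module $M$ and a short exact sequence $0\to K\to F\to M\to 0$ with $F$ free. Since $\wgldim(R)\le 1$ for a semihereditary ring, $K$ is flat; since $R$ is $DW$, $M$ is copure projective and hence $\Ext_R^1(M,K)=0$, so the sequence splits and $M$ is a direct summand of $F$, in particular flat. The implication $(2)\Rightarrow(3)$ is then immediate: a finitely presented $w$-copure projective module is flat by $(2)$, and a finitely presented flat module is projective.

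The substance is $(3)\Rightarrow(1)$, where I would show that every finitely generated ideal $I$ is projective. As a submodule of $R$, $I$ is torsionless, so the evaluation map $\sigma_I\colon I\to I^{\ast\ast}$ is injective; the Auslander--Bridger identity $\Ker(\sigma_I)\cong\Ext_R^1(\mathrm{Tr}\,I,R)$ then yields $\Ext_R^1(\mathrm{Tr}\,I,R)=0$. Because $R$ is coherent and $I$ finitely presented, the transpose $\mathrm{Tr}\,I$ is again finitely presented, and the vanishing of $\Ext_R^1(\mathrm{Tr}\,I,R)$ is in particular GV-torsion, so Corollary \ref{coro} makes $\mathrm{Tr}\,I$ a $w$-copure projective module. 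Hypothesis $(3)$ upgrades it to projective, and since $\mathrm{Tr}^2$ is the identity modulo projective summands, $I=\mathrm{Tr}\,\mathrm{Tr}\,I$ is projective as well; hence $R$ is semihereditary. As a variant avoiding the transpose, one checks first that $(3)$ forces $R$ to be $DW$ --- for $J\in GV(R)$ the module $R/J$ is finitely presented and $w$-split, so $w$-copure projective by Lemma \ref{w-split} and then projective by $(3)$, yet GV-torsion, hence zero --- and then invokes the classical characterization of semihereditary rings by copure projective modules in \cite{fu zh}.

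The main obstacle is exactly this last passage from $w$-copure projectivity back to honest projectivity. Hypothesis $(3)$ only sees $\Ext^1(-,R)$ up to GV-torsion, and there is no reason for $\Ext_R^1(I,R)$ itself to be GV-torsion, so $I$ cannot be tested directly; the transpose is what converts the unconditional torsionlessness of $I$ into an \emph{exact} vanishing to which $(3)$ applies. The two facts that need to be verified with care in the finitely presented, merely coherent setting are the Auslander--Bridger identification $\Ker(\sigma_I)\cong\Ext_R^1(\mathrm{Tr}\,I,R)$ and the stable involutivity $\mathrm{Tr}^2\cong\mathrm{id}$; the small reflexivity computation underlying the claim that semihereditary rings are $DW$ is the other point to pin down.
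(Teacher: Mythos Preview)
Your argument is correct, but for $(3)\Rightarrow(1)$ the paper takes a far shorter path than either of your routes. It simply observes that any finitely presented \emph{copure} projective module is trivially $w$-copure projective (zero is $GV$-torsion), hence projective by hypothesis $(3)$, and then invokes \cite[Proposition~2.5]{TAO}, which characterizes semihereditary rings as those over which every finitely presented copure projective module is projective. No $DW$ reduction and no transpose machinery are needed for this direction; your variant is essentially the paper's argument with the superfluous step of first establishing $GV(R)=\{R\}$, and your main route via the Auslander--Bridger transpose, while correct and self-contained, introduces technical overhead (the identity $\Ker\sigma_I\cong\Ext_R^1(\mathrm{Tr}\,I,R)$ and stable involutivity of $\mathrm{Tr}$) that the simple implication ``copure projective $\Rightarrow$ $w$-copure projective'' sidesteps entirely. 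For $(1)\Rightarrow(2)$ the two proofs agree in spirit---both pass through semihereditary $\Rightarrow$ $DW$---but the paper again just cites \cite[Proposition~2.5]{TAO} for ``copure projective $\Rightarrow$ flat'', whereas your splitting argument (which in fact yields projectivity, not merely flatness) is more explicit. What your approach buys is independence from \cite{TAO}; what the paper's approach buys is brevity.
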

\begin{proof}
	$(1)\Rightarrow (2)$ Let $M$ be a $w$-copure projective $R$-module. Then $M$ is copure projective (since $R$ is $DW$). Thus, $M$ is flat by \cite[Proposition 2.5]{TAO}.
	
	$(2)\Rightarrow (3)$ Let $M$ be a finitely presented $w$-copure projective $R$-module. Then $M$ is flat by $(2)$. Hence, $M$ is projective (since $M$ is finitely presented).
	
	$(3)\Rightarrow (1)$ Let $M$ be a finitely presented copure projective $R$-module. Then $M$ is finitely presented $w$-copure projective. Thus, by $(3)$ $M$ is projective. Hence, by \cite[Proposition 2.5]{TAO}, $R$ is  semihereditary.
\end{proof}

\section{\bf Change of rings theorems for the $w$-copure projective}

In this Section, we study change of rings theorems for the $w$-copure projective in various contexts.
\begin{definition}[\cite{BKM}, Definition 4.1]\label{def bouba}
	Let $\phi: R \rightarrow T$ be a ring homomorphism. Then $T$ is said to have property ($B_\phi$) if the following property is satisfied: Let $N$ be a $T$-module. If $N$ is a GV-torsion $R$-module, then $N$ is also a GV-torsion $T$-module.
\end{definition}
\begin{proposition}\label{propo2}
	Let $\phi:R\rightarrow T$ be a ring homomorphism such that $T$ as an $R$-module is a flat module, and $T$ has property ($B_\phi$). If $M$ is a $w$-copure projective $R$-module, then $T\otimes_R M$ is a $w$-copure projective $T$-module.
\end{proposition}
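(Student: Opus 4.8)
The plan is to reduce the assertion to the $w$-copure projectivity of $M$ over $R$ via a flat base-change isomorphism, and then to transport the resulting GV-torsion property from $R$ to $T$ using the hypothesis ($B_\phi$). First I would fix an arbitrary flat $T$-module $N$; the goal is to show that $\Ext_T^1(T\otimes_R M, N)$ is a GV-torsion $T$-module. Since $\phi$ makes $T$ a flat $R$-module, applying $T\otimes_R -$ to an $R$-projective resolution $P_\bullet\to M$ yields a $T$-projective resolution $T\otimes_R P_\bullet\to T\otimes_R M$: flatness of $T$ preserves exactness, and $T\otimes_R P$ is $T$-projective whenever $P$ is $R$-projective (a direct summand of a free module maps to a direct summand of a free module). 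Combining this with the Hom-tensor adjunction $\Hom_T(T\otimes_R P, N)\cong\Hom_R(P,N)$ and passing to cohomology gives the natural isomorphism of $T$-modules
$$\Ext_T^1(T\otimes_R M, N)\cong \Ext_R^1(M,N),$$
where on the right $N$ is regarded as an $R$-module along $\phi$.

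Next I would observe that $N$ is flat as an $R$-module: it is flat over $T$ by assumption and $T$ is flat over $R$, so transitivity of flatness gives flatness over $R$. Because $M$ is $w$-copure projective over $R$ and $N$ is $R$-flat, the module $\Ext_R^1(M,N)$ is GV-torsion over $R$. Through the displayed isomorphism, $\Ext_T^1(T\otimes_R M,N)$ is therefore a $T$-module which is GV-torsion when viewed as an $R$-module.

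The final step is to pass from ``GV-torsion over $R$'' to ``GV-torsion over $T$,'' which is precisely what property ($B_\phi$) is designed to supply: applying ($B_\phi$) to the $T$-module $\Ext_T^1(T\otimes_R M,N)$ shows it is GV-torsion over $T$. Since $N$ was an arbitrary flat $T$-module, this proves that $T\otimes_R M$ is a $w$-copure projective $T$-module. The main obstacle, and the point requiring the most care, is verifying that the base-change isomorphism is genuinely $T$-linear, so that the two notions of GV-torsion are being compared on one and the same $T$-module and ($B_\phi$) can legitimately be invoked; one should also confirm that flatness really does descend along $\phi$, so that $N$ qualifies as a valid test module for the $w$-copure projectivity of $M$ over $R$. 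The same argument carried out in all degrees $n\geq 1$ would, under the stronger hypothesis, give the corresponding statement for strongly $w$-copure projective modules.
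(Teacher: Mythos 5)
Your proposal is correct and follows essentially the same route as the paper: flat base change along $\phi$ together with the Hom--tensor adjunction gives the $T$-linear isomorphism $\Ext_T^1(T\otimes_R M,N)\cong\Ext_R^1(M,N)$, flatness of $N$ descends to $R$, and property ($B_\phi$) converts GV-torsion over $R$ into GV-torsion over $T$. The paper merely phrases the base-change isomorphism via a single projective presentation and a commutative diagram rather than a full resolution, which is an inessential difference.
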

\begin{proof}
	Let $0 \rightarrow A\rightarrow P \rightarrow M \rightarrow 0$ be an exact sequence, where $P$ is a projective $R$-module. Then, $0 \rightarrow T \otimes_R A \rightarrow T\otimes_R P \rightarrow T \otimes_R M \rightarrow 0$ is an exact sequence. For any flat $T$-module $F$, we have the following commutative diagram with exact rows:
	$$\begin{array}{ccccc}
	{\rm Hom}_{T}(T\otimes_R P, F) & \rightarrow & {\rm Hom}_{T}(T \otimes_R A,F ) & \rightarrow & {\rm Ext}_{T}^1(T \otimes_R M,F) \\
	\downarrow &  & \downarrow &  & \downarrow \\
	{\rm Hom}_{R}(P, F) & \rightarrow & {\rm Hom}_{R}(A,F) & \rightarrow & {\rm Ext}_{R}^1(M,F)
	\end{array}$$
By the adjoint isomorphism theorem, two vertical arrows on the left side are isomorphisms. Thus, ${\rm Ext}_{T}^1(T \otimes_R M,F)\cong {\rm Ext}_{R}^1(M,F)$ and by \cite[Theorem 3.8.5]{KIMBOOK}, $F$ is a flat $R$-module and so 	${\rm Ext}_{R}^1(M,F)$ is a GV-torsion $R$-module. Hence, ${\rm Ext}_{T}^1(T \otimes_R M,F)$ is a GV-torsion $T$-module by Definition \ref{def bouba}. Hence, $T \otimes_R M$ is a $w$-copure projective $T$-module.
\end{proof}


\begin{corollary}\label{coro3}
	Let $M$ be a strongly $w$-copure projective $R$-module.
	\begin{enumerate}
		\item  $M[X]$ is a $w$-copure projective $R[X]$-module.
		\item  If $S$ is a multiplicative subset of $R$, then $M_S$ is a $w$-copure projective $R_S$-module.
	\end{enumerate}
\end{corollary}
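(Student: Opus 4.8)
The plan is to derive both statements directly from Proposition \ref{propo2}, applied to the two natural ring homomorphisms out of $R$, after checking its hypotheses in each case. First I would observe that a strongly $w$-copure projective module is in particular $w$-copure projective (this is the case $i=1$ of the defining condition), so the hypothesis of Proposition \ref{propo2} on $M$ is satisfied and we are free to invoke it.

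For part (1), I would take $\phi : R \to R[X]$ to be the canonical inclusion, so that $T = R[X]$. The ring $R[X]$ is free, and hence flat, as an $R$-module, so the flatness requirement in Proposition \ref{propo2} holds automatically. Granting that $R[X]$ has property ($B_\phi$), Proposition \ref{propo2} gives that $R[X]\otimes_R M$ is $w$-copure projective over $R[X]$; since $R[X]\otimes_R M \cong M[X]$ canonically, this is exactly the assertion.

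For part (2), I would take $\phi : R \to R_S$ to be the localization map, so that $T = R_S$. Localizations are flat $R$-modules, so again the flatness hypothesis is met, and the canonical isomorphism $R_S \otimes_R M \cong M_S$ identifies $T\otimes_R M$ with $M_S$. As before, the conclusion follows from Proposition \ref{propo2} once $R_S$ is known to have property ($B_\phi$) over $R$.

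The main obstacle in both cases is therefore the verification of property ($B_\phi$): that a $T$-module which is GV-torsion over $R$ is again GV-torsion over $T$. These instances belong to the framework of \cite{BKM}, from which Definition \ref{def bouba} is drawn, and I would cite them from there; if one prefers a direct check, the key point is that GV-ideals transport along these maps, namely $J[X]\in GV(R[X])$ and $J_S \in GV(R_S)$ (or $J_S = R_S$) whenever $J\in GV(R)$. Given $x$ in a $T$-module with $Jx=0$, commutativity of $T$ together with $R\subseteq T$ yields $(at)x = t(ax) = 0$ for every $a\in J$ and every $t\in T$, so the transported ideal $J[X]$ (respectively $J_S$) annihilates $x$. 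This produces the required $GV(T)$-ideal, establishing property ($B_\phi$) and completing the reduction to Proposition \ref{propo2}.
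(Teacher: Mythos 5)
Your proposal is correct and follows essentially the same route as the paper, which simply applies Proposition \ref{propo2} with $T:=R[X]$ and $T:=R_S$. The only difference is that you explicitly verify the flatness hypothesis and sketch why property ($B_\phi$) holds in these two cases, details the paper leaves implicit.
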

\begin{proof}
 $(1)$ and $(2)$ follow by setting $T:=R[X]$ and $T:=R_S$ respectively in
 Proposition \ref{propo2}.
\end{proof}

\begin{proposition}\label{propo3}
	Let $M$ be a strongly $w$-copure projective $R$-module. Then  $M\{X\}$ is a strongly $w$-copure projective $R\{X\}$-module.
\end{proposition}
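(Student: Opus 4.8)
The plan is to run exactly the strategy of Proposition \ref{propo2}, but to upgrade the single $\Ext^1$ isomorphism used there to an isomorphism in every positive degree, which is precisely what the \emph{strong} version of the conclusion requires. Set $T := R\{X\}$. First I would record the two structural facts that make the machine turn: $T = R[X]_{S_w}$ is flat as an $R$-module, being a localization of the free $R$-module $R[X]$; and by definition $M\{X\} = M \otimes_R T = T \otimes_R M$, so $M\{X\}$ is an induced (base-changed) module. It is this induced form that lets flat base change apply verbatim.

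Next I would establish the flat base-change isomorphism for $\Ext$ in all degrees. Choosing a projective resolution $P_{\bullet} \to M$ over $R$ and applying $T \otimes_R -$, flatness of $T$ over $R$ yields a projective resolution $T \otimes_R P_{\bullet} \to M\{X\}$ over $T$. For any $T$-module $F$ the Hom-tensor adjunction gives $\Hom_T(T \otimes_R P_i, F) \cong \Hom_R(P_i, F)$ naturally in $i$, so passing to cohomology produces
$$\Ext_T^n(M\{X\}, F) \cong \Ext_R^n(M, F), \qquad n \geq 1.$$
This is the degree-$n$ analog of the commutative diagram appearing in Proposition \ref{propo2}, and it is where the ``strongly'' in the statement genuinely gets used.

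Now let $F$ be a flat $T$-module. By \cite[Theorem 3.8.5]{KIMBOOK} (transitivity of flatness through $R \to T$), $F$ is flat as an $R$-module, so since $M$ is strongly $w$-copure projective over $R$, each $\Ext_R^n(M, F)$ is $GV$-torsion as an $R$-module. Via the displayed isomorphism, $\Ext_T^n(M\{X\}, F)$ is an $R\{X\}$-module that is $GV$-torsion \emph{over $R$}; it remains to transfer this $GV$-torsionness to the ring $T$ itself.

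The final and only delicate point is that $\phi : R \to R\{X\}$ enjoys property $(B_\phi)$ of Definition \ref{def bouba}, which can be cited from \cite{BKM} but is in fact very clean here. If $N$ is any $R$-module that is $GV$-torsion over $R$, then $N\{X\} = N \otimes_R R\{X\} = 0$: each generating relation $Jx = 0$ with $J = (a_0,\dots,a_k) \in GV(R)$ yields the polynomial $f = \sum a_i X^i$, whose content is $J \in GV(R)$ so that $f \in S_w$, and $fx = 0$ kills $x/1$ in the localization. If moreover $N$ carries an $R\{X\}$-module structure, the canonical surjection $N \otimes_R R\{X\} \to N$ forces $N = 0$, which is trivially $GV$-torsion over $T$. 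Applying this to $N = \Ext_T^n(M\{X\}, F)$ for every $n \geq 1$ shows $M\{X\}$ is strongly $w$-copure projective over $R\{X\}$. I expect the higher-degree base-change isomorphism and the verification of $(B_\phi)$ for the $w$-Nagata ring to be where all the (modest) content sits; the remaining steps are formal.
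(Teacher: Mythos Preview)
Your argument is correct, and it differs from the paper's in a way worth noting. The paper proceeds in two hops, first passing to $R[X]$ and then localizing to $R\{X\}$ via Corollary~\ref{coro3}, and only afterwards invokes the fact (\cite[Theorem 6.6.18]{KIMBOOK}) that $R\{X\}$ is a $DW$-ring to upgrade ``$w$-copure projective'' to ``strongly $w$-copure projective''. That last upgrade is the delicate point: over a $DW$-ring the $w$-notion collapses to the ordinary one, but copure projective does \emph{not} automatically imply strongly copure projective, so as literally written the paper's final sentence is not self-evident. Your approach sidesteps this entirely by running flat base change on the whole projective resolution, obtaining the isomorphism $\Ext_{R\{X\}}^n(M\{X\},F)\cong \Ext_R^n(M,F)$ in every degree at once, and then using your direct verification of $(B_\phi)$ (equivalently, that an $R\{X\}$-module which is $GV$-torsion over $R$ must vanish) to kill each $\Ext^n$ outright. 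In effect you have rediscovered, in concrete form, exactly the content of the $DW$-property of $R\{X\}$ that the paper cites, while simultaneously handling all degrees --- so your route is both self-contained and more robust for the ``strongly'' conclusion. One cosmetic remark: when you show $N\{X\}=0$ for $N$ $GV$-torsion, the element-level argument extends to an arbitrary element $\sum n_j X^j\in N[X]$ by taking a single $J\in GV(R)$ (a finite product of the individual annihilating $GV$-ideals) that kills every $n_j$, whence the corresponding $f=\sum a_iX^i\in S_w$ annihilates the whole sum; you essentially said this, but it is worth making the passage from ``$x$'' to a general element explicit.
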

\begin{proof}
	Let $M$ be a strongly $w$-copure projective $R$-module. So, by Corollary \ref{coro3} $(1)$, $M[X]$ is $w$-copure projective. By Corollary \ref{coro3} $(2)$, $M\{X\}$ is a  $w$-copure projective $R\{X\}$-module. Since $R\{X\}$ is a $DW$-ring by \cite[Theorem 6.6.18]{KIMBOOK}, $M\{X\}$ is a  strongly $w$-copure projective $R\{X\}$-module.
\end{proof}	
\begin{proposition}\label{carw1}
	Let $R$ be a coherent ring. Then:
	\begin{enumerate}
		\item  If $M$ is a strongly $w$-copure projective $R$-module, then $M\{X\}$ is a strongly $w$-copure flat $R\{X\}$-module.
		\item  If $M$ is a finitely presented strongly $w$-copure flat $R$-module, then $M\{X\}$ is a strongly $w$-copure projective $R\{X\}$-module.
	\end{enumerate}
\end{proposition}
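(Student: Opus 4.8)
The plan is to derive both halves purely by composing two results already in hand: the change-of-rings statement \ref{propo3}, which carries strongly $w$-copure projective modules from $R$ to the $w$-Nagata ring $R\{X\}$, and Proposition \ref{carw}, which over a coherent ring converts between strongly $w$-copure projective and strongly $w$-copure flat modules. No new Ext/Tor computation should be needed; the whole task is to compose these in the right order and to make sure the coherence and finite-presentation hypotheses are available where each is invoked.

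For (1), I would first invoke \ref{propo3}: as $M$ is strongly $w$-copure projective over $R$, the module $M\{X\}$ is strongly $w$-copure projective over $R\{X\}$. I would then pass from ``projective'' to ``flat'' by applying Proposition \ref{carw}(1) with base ring $R\{X\}$, which immediately gives that $M\{X\}$ is strongly $w$-copure flat over $R\{X\}$, as required. The one hypothesis that must be checked before \ref{carw}(1) may be applied to $R\{X\}$ is that $R\{X\}$ is itself coherent.

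Part (2) runs in the reverse order and needs no coherence of $R\{X\}$. Here $M$ is finitely presented and strongly $w$-copure flat over $R$, and $R$ is coherent by hypothesis, so Proposition \ref{carw}(2) applies verbatim over $R$ and shows that $M$ is strongly $w$-copure projective over $R$. Plugging this into \ref{propo3} yields that $M\{X\}$ is strongly $w$-copure projective over $R\{X\}$, which is exactly the assertion. The finite-presentation hypothesis enters precisely once, to license \ref{carw}(2).

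The genuine obstacle is the coherence of $R\{X\}$ demanded in part (1): coherence of $R$ does not obviously descend to the polynomial ring $R[X]$, let alone to its localization $R\{X\}=R[X]_{S_w}$, so it cannot be waved through. The statement to secure is that the $w$-Nagata ring of a coherent ring is coherent; I expect this to follow from the Nagata-ring transfer machinery of \cite{KIMBOOK}, using that $R\{X\}$ is a $DW$-ring (as already exploited in \ref{propo3}), so that the $w$-finiteness forced by coherence of $R$ reads as ordinary coherence over $R\{X\}$. Once this is in place both parts are immediate; alternatively, should a change-of-rings result for strongly $w$-copure flat modules along $R\to R\{X\}$ be available (the flat analogue of \ref{propo3}), part (1) can instead be proved by applying \ref{carw}(1) over the coherent ring $R$ and then transferring, which avoids the coherence of $R\{X\}$ altogether.
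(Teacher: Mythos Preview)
Your argument for part (2) is exactly the paper's: apply Proposition \ref{carw}(2) over $R$ to pass from strongly $w$-copure flat to strongly $w$-copure projective, then invoke Proposition \ref{propo3}.

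For part (1), your primary route (transfer first via \ref{propo3}, then apply \ref{carw}(1) over $R\{X\}$) does run into the obstacle you yourself flag: you would need $R\{X\}$ coherent, and coherence is not in general stable under polynomial extension, so this is not something one can simply read off from the Nagata-ring machinery. The paper sidesteps this entirely by taking precisely the alternative you sketch at the end: it applies Proposition \ref{carw}(1) over the coherent ring $R$ to obtain that $M$ is strongly $w$-copure flat, and then invokes the ``flat analogue of \ref{propo3}'', which is available as \cite[Proposition 4.6]{BKM}, to conclude that $M\{X\}$ is strongly $w$-copure flat over $R\{X\}$. So your fallback is the intended proof; your main route, as written, has a real gap.
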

\begin{proof}
	$(1)$ Let $M$ be a strongly $w$-copure projective $R$-module, then, by Proposition \ref{carw}, $M$ is strongly $w$-copure flat. So, by \cite[Proposition 4.6]{BKM} we have $M\{X\}$ is a strongly $w$-copure flat $R\{X\}$-module.
	
	$(2)$ Let $M$ be a finitely presented strongly $w$-copure flat $R$-module. Then, by Proposition \ref{carw}, $M$ is strongly $w$-copure projective and  by Proposition \ref{propo3}, $M\{X\}$ is a strongly $w$-copure projective $R\{X\}$-module.
\end{proof}


\begin{thebibliography}{999}\addcontentsline{toc}{section}{\protect\numberline{}{Bibliography}}
	
\par\bibitem{RBT} R. A. K. Assaad, E. M. Bouba, and M. Tamekkante, {\em w-FP-projective modules and dimensions,} Algebra and Related	Topics with Applications: ICARTA-2019, pp. 47-57, Springer Nature Singapore, Singapore, (2022).
\par\bibitem{REF}  F. Almahdi and R. A. K. Assaad, {\em A note on w-split modules,} Palestine J. Math. 10 (2021),160-168.	
\par\bibitem{BKM}  E. M. Bouba, H. Kim, and M. Tamekkante, {\em On w-copure flat modules and dimension,} Bull. Korean Math. Soc, 57 (2020), No. 3, 763-780.
\par\bibitem{Said  Kim  Wang} S. El Baghdadi, H. Kim and F. Wang, {\em Divisible is injective over Krull domains,} Comm. Algebra, 44 (2016), 4294-4301.
\par\bibitem{fu zh} X. Fu, H. Zhu and N. Ding, {\em On copure projective modules and copure projective dimensions,} Comm. Algebra, 40 (2012), 343-359.
\par\bibitem{A Mimo} A. Mimouni, {\em Integral domains in which each ideal is a w-ideal,} Comm. Algebra, 33 (2005), 1345-1355.
\par\bibitem{TRB} M. Tamekkante, R. A. K. Assaad and E. M. Bouba, {\em Note on the $DW$ rings,} Int. Electron. J. Algebra, 25 (2019), 43-54.
\par\bibitem{FGTYPE} F. Wang, {\em Finitely presented type modules and $w$-coherent rings,}  J. Sichuan Normal Univ., 33, (2010), 1-9.
\par\bibitem{KIMBOOK} F. Wang and H. Kim, {\em Foundations of Commutative Rings and Their Modules,}  (Springer Nature Singapore Pte Ltd., Singapore, (2016).
\par\bibitem{Wang and Kim2} F. Wang and H. Kim,  {\em $w$-injective modules and $w$-semi-hereditary rings,} J. Korean Math. Soc. 51 (2014), no. 3,  509-525.	
\par\bibitem{FHT} F. Wang and H. Kim, {\em Two generalizations of projective modules and their applications,} J. Pure Appl. Algebra 219 (2015), 2099-2123.
\par\bibitem{MCCASLAND}  F. Wang and R.L. McCasland,  {\em On $w$-modules over strong Mori domains,}  Comm. Algebra,  25 (1997),  1285-1306.
\par\bibitem{Wang and Lie} F. Wang and L. Qiao,  {\em A new version of a theorem of Kaplansky,}  Comm. Algebra, 48 (2020), 3415-3428.
\par\bibitem{wange qi} F. Wang and L. Qiao, {\em A homological characterization of Krull domains II,} Comm. Algebra, 47 (2019), 1917-1929.		
\par\bibitem{TAO} T. Xiong, {\em Rings of copure projective dimension one,} J. Korean Math. Soc. 54 (2017), No. 2, 427-440.	
\par\bibitem{SXFW} S. Xing and F. Wang, {\em Purity over Pr\"{u}fer v-multiplication domains, II,} J. Algebra Appl. 17 (2018), 1850223.	
\par\bibitem{Gao}Z. Gao, {\em n-Copure projective modules,} Russ. Matem. Zametki, vol. 97 N. 1 (2015), 58-66.
\par\bibitem{zho wang}  C. Zhou and F. Wang, {\em The direct and inverse limits of w-modules,} Comm. Algebra, 44 (2016), 2495-2500.
\par\bibitem{WANG FLAT}   S. Zhao, F. Wang, and  H. Chen,  {\em Flat modules over a commutative ring are $w$-modules,} J. Sichuan Normal Univ. 35, (2012),  364-366. (in Chinese).
\par\bibitem{HFX} H. Yin, F. Wang, X. Zhu and Y. Chen, {\em $w$-Modules over commutative rings,} J. Korean Math. Soc. 48 (2011), 207-222.
\par\end{thebibliography}
\end{document}